\date{\today}
\newtheorem{theorem}{Theorem}[section]
\newtheorem{proposition}[theorem]{Proposition}
\newtheorem{lemma}[theorem]{Lemma}
\newtheorem{corollary}[theorem]{Corollary}
\newdefinition{definition}[theorem]{Definition}
\newdefinition{example}[theorem]{Examples}
\newdefinition{remark}[theorem]{Remark}
\newproof{proof}{Proof}
\newcommand{\Q}{\mathbf Q}
\newcommand{\N}{\mathbf N}
\newcommand{\Z}{\mathbf Z}
\newcommand{\p}{\mathfrak p}
\newcommand{\cO}{{\mathcal O}}
\newcommand{\Int}{{\rm Int}}
\newcommand{\Gal}{{\rm Gal}}
\newcommand{\tr}{{\rm tr}}
\begin{document}

\begin{frontmatter}

\title{Galois structure on integral valued polynomials\\
 \vskip0.3cm
\small{accepted for publication in J. Number Theory (2016). Arxiv:\href{http://arxiv.org/abs/1511.01295}{http://arxiv.org/abs/1511.01295} } }

\author{Bahar Heidaryan}
\address{Dipartimento di Matematica, Universit\`a di Padova, Via Trieste 63, 35121 Padova, Italy \& \\
Department of Mathematics, Tarbiat Modares University, 14115-134, Tehran, Iran.}
\ead{b.heidaryan@modares.ac.ir}

\author{Matteo Longo}
\ead{mlongo@math.unipd.it}

\author{Giulio Peruginelli} 
\ead{gperugin@math.unipd.it}
\address{Dipartimento di Matematica, Universit\`a di Padova, Via Trieste 63, 35121 Padova, Italy}

\begin{abstract} We characterize finite Galois extensions $K$ of the field of rational numbers in terms of 
the rings $\Int_{\Q}(\mathcal O_K)$, recently introduced by Loper and Werner, consisting
of those polynomials which have coefficients in $\Q$ and 
such that $f(\mathcal O_K)$ is contained in  $\mathcal O_K$. 
We also address the problem of constructing a  basis for $\Int_{\Q}(\mathcal O_K)$ as a $\Z$-module. 
\end{abstract}

\begin{keyword}
Characteristic ideal\sep Finite Galois extension\sep Integer-valued polynomial\sep Regular basis\sep  Tame ramification \sep Null ideal. MSC Classification codes: 13F20, 11R32, 11S20, 11C08.
\end{keyword}

\end{frontmatter}


\section{Introduction}
The main object of this paper is to study the class of rings 
$$\Int_\Q(\mathcal O_K):=\Int(\mathcal O_K)\cap \Q[X]$$
where $K$ varies among the set of finite Galois extensions of $\Q$; here $\mathcal O_K$ is the ring of algebraic integers of $K$ 
and $\Int(\mathcal O_K)$ is the ring of polynomials $f\in K[X]$ such that $f(\mathcal O_K)$ is contained in $\mathcal O_K$. 

The rings $\Int_\Q(\mathcal O_K)$ have been introduced in \cite{LopWer} and studied also in \cite{PerIntvalbounded}.
Among other things, the authors of \cite{LopWer} proved that $\Int_\Q(\mathcal O_K)$ is a Pr\"ufer domain. 
It is immediate to see that $\Int_\Q(\mathcal O_K)$ is contained in 
$$\Int(\Z)=\{f\in\Q[X] \mid f(\Z)\subseteq\Z\},$$
the classical ring of integer-valued polynomials. Moreover, if $K$ is a proper field extension of $\Q$, then $\Int_\Q(\mathcal O_K)$ is properly contained in $\Int(\Z)$: in fact, let $p\in\Z$ be a prime which is not totally split in $\mathcal O_K$; then it is not difficult to see that the polynomial 
\[f(X)=\frac{X(X-1)\ldots(X-(p-1))}{p}\]
is in $\Int(\Z)$ but not in $\Int_{\Q}(\mathcal O_K)$. This is an evidence of the fact that, for the class of finite Galois extension $K/\Q$, the ring $\Int_{\Q}(\mathcal O_K)$ is completely determined by the set of primes $p\in\Z$ which are totally split in $\mathcal O_K$, and therefore by the field $K$ itself. Our main result is a characterization of finite Galois extensions of $\Q$ in terms of the rings $\Int_\Q(\mathcal O_K)$. More precisely, as a corollary 
of our main result Theorem  \ref{equalIntQOK}, we prove the following: 

\begin{theorem} Let $K$ and $K'$ be finite Galois extensions of $\Q$. Then 
$\Int_\Q(\mathcal O_K)=\Int_\Q(\mathcal O_{K'})$ if and only if $K=K'$. 
\end{theorem}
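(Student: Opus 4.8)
The implication $K=K'\Rightarrow\Int_\Q(\mathcal O_K)=\Int_\Q(\mathcal O_{K'})$ is immediate, so the plan is to prove the converse. The strategy is to recover, purely from the ring $\Int_\Q(\mathcal O_K)$, the set
\[\mathrm{Spl}(K):=\{p\text{ prime}:p\text{ is totally split in }\mathcal O_K\},\]
and then to invoke the classical fact that a finite Galois extension of $\Q$ is uniquely determined by its set of totally split primes.

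First I would show, for each prime $p$, that the polynomial $f_p(X)=X(X-1)\cdots(X-(p-1))/p$ considered in the introduction satisfies
\[f_p\in\Int_\Q(\mathcal O_K)\iff p\in\mathrm{Spl}(K).\]
Since the coefficients of $f_p$ have denominator dividing $p$, whether $f_p$ lies in $\Int_\Q(\mathcal O_K)$ depends only on the primes $\p$ of $\mathcal O_K$ above $p$. If $p\in\mathrm{Spl}(K)$, each such $\p$ has $e_\p=f_\p=1$, so $\mathcal O_K/\p\cong\mathbb F_p$ and $v_\p(p)=1$; for any $\alpha\in\mathcal O_K$ the residue of $\alpha$ modulo $\p$ is one of $0,\dots,p-1$, so $\p\mid\prod_{i=0}^{p-1}(\alpha-i)$ and $v_\p(f_p(\alpha))\geq0$, proving $f_p\in\Int_\Q(\mathcal O_K)$. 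Conversely, if $p\notin\mathrm{Spl}(K)$, I would pick $\p\mid p$ with $e_\p f_\p>1$ and produce an $\alpha\in\mathcal O_K$ with $v_\p(f_p(\alpha))<0$: if $f_\p>1$ take $\alpha$ with residue in $\mathbb F_{p^{f_\p}}\setminus\mathbb F_p$, so that $v_\p\bigl(\prod_{i=0}^{p-1}(\alpha-i)\bigr)=0<v_\p(p)$; if $f_\p=1<e_\p$ take a uniformizer $\alpha$ at $\p$, so that $v_\p\bigl(\prod_{i=0}^{p-1}(\alpha-i)\bigr)=1<e_\p=v_\p(p)$. This gives the displayed equivalence, hence $\mathrm{Spl}(K)=\{p:f_p\in\Int_\Q(\mathcal O_K)\}$; in particular the hypothesis $\Int_\Q(\mathcal O_K)=\Int_\Q(\mathcal O_{K'})$ yields $\mathrm{Spl}(K)=\mathrm{Spl}(K')$.

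To finish, I would use Chebotarev's density theorem: $\mathrm{Spl}(KK')=\mathrm{Spl}(K)\cap\mathrm{Spl}(K')=\mathrm{Spl}(K)$, and the density of $\mathrm{Spl}(L)$ equals $1/[L:\Q]$ for every finite Galois $L/\Q$, so $[KK':\Q]=[K:\Q]$, which forces $KK'=K$ and hence $K'\subseteq K$; by symmetry $K=K'$. Alternatively, once $\mathrm{Spl}(K)=\mathrm{Spl}(K')$ is known one can conclude directly from the explicit description of $\Int_\Q(\mathcal O_K)$ furnished by Theorem~\ref{equalIntQOK}.

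I expect the main obstacle to be organizing the converse half of the equivalence cleanly: separating the partially-split case ($f_\p>1$) from the ramified case ($e_\p>1$), and checking that the required $\alpha$ exists in $\mathcal O_K$ (surjectivity of $\mathcal O_K\to\mathcal O_K/\p$ and existence of a uniformizer in a Dedekind domain). The only genuinely non-elementary ingredient is that a finite Galois extension of $\Q$ is pinned down by its set of totally split primes, which rests on Chebotarev's density theorem; presumably Theorem~\ref{equalIntQOK} already packages the ring-theoretic side of this.
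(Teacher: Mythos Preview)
Your argument is correct and matches the paper's \emph{alternative} proof given in Remark~\ref{alternative proof}: detect the set of totally split primes via a test polynomial, then conclude by Chebotarev. The paper's Corollary~\ref{ptotallysplit} establishes exactly your equivalence, using $\frac{X^p-X}{p}$ rather than $\frac{X(X-1)\cdots(X-(p-1))}{p}$, but the two polynomials are interchangeable for this purpose (they agree modulo $p$), and your case split $f_\p>1$ versus $e_\p>1$ is essentially the same as the paper's.

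The paper's \emph{primary} route is different and somewhat stronger: Theorem~\ref{equalIntQOK} shows that when $K/\Q$ is Galois one can reconstruct $\mathcal O_K$ itself as $\{\alpha\in\overline{\Z}: f(\alpha)\in\overline{\Z}\text{ for all }f\in\Int_\Q(\mathcal O_K)\}$, from which the uniqueness statement is immediate. That proof also rests on Chebotarev (to find a prime split in $K$ but not in the Galois closure of $\Q(\alpha)$), plus Dedekind--Kummer to locate a bad prime ideal for a given $\alpha\notin\mathcal O_K$. Your approach is more direct for the stated corollary; the paper's approach yields the extra information that $\Int_\Q(\mathcal O_K)$ intrinsically remembers $\mathcal O_K$ as a subset of $\overline{\Z}$, not just up to isomorphism. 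One small remark: your closing sentence invoking Theorem~\ref{equalIntQOK} as an ``alternative'' is circular, since that theorem is precisely the result being proved---but your density argument before it stands on its own.
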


The statement is false if we consider finite extensions of $\Q$ which are not Galois. In fact, if $K/\Q$ is a finite non-Galois extension and $K'$ is any conjugate field of $K$ over $\Q$ different from $K$, then it is easy to see that $\Int_\Q(\mathcal O_K)=\Int_\Q(\mathcal O_{K'})$. 

 A study somehow related to the present paper about the so-called polynomial overrings of $\Int(\Z)$, that is, rings $R$ such that $\Int(\Z)\subseteq R\subseteq\Q[X]$, has been recently done in \cite{ChabPer}, in which a complete and thorough classification of such rings has been given: each of them can be realized as the ring of integer-valued polynomials over some closed subset of the profinite completion of $\Z$. 

We can reformulate our main result in more abstract terms as follows. Denote by $\mathcal G$ the category whose objects are 
ring of integers $\mathcal O_K$ of finite Galois extensions $K/\Q$ with homomorphism given by inclusions, 
and by $\mathcal C$ the category of subrings of $\Q[X]$ in which morphisms are again inclusions. 
Then the functor 
\[\Int_\Q: \mathcal G\longrightarrow \mathcal C\] 
which takes an object $\mathcal O_K$ of $\mathcal G$ to $\Int_\Q(\mathcal O_K)$ and 
the inclusion $\mathcal O_K\subseteq \mathcal O_{K'}$ 
to the inclusion $\Int_\Q(\mathcal O_{K'})\subseteq \Int_\Q(\mathcal O_K)$,  
is a  full and  faithful contravariant functor ( see also Remark \ref{alternative proof}). 

We next address the problem of constructing a regular basis of $\Int_{\Q}(\mathcal O_K)$ as a $\Z$-module ( see Section \ref{section2} for the definition of regular basis). 
In particular, we discuss the value of the $p$-adic valuation of the leading term of the element of degree $n$ in a regular basis, for each prime number $p$. We show that this is equivalent to understanding the analogous question  for the ring 
\[\Int_{\Q_p}(K):=\Int(\mathcal O_K)\cap \Q_p[X]\] 
for each finite extension $K/\Q_p$, where $\Int(\mathcal O_K)$ is 
the ring of $f\in K[X]$ such that $f(\mathcal O_K)\subseteq \mathcal O_K$, and $\mathcal O_K$ is the valuation ring of $K$. 
We completely determine these values in Theorem \ref{equality-theorem}, in the case of tame ramification. 
As a consequence, we obtain the second main result of this paper. 
To state the theorem, let $K/\Q$ be a Galois extension and, 
for any prime $p$ of $\Z$, let $q_p$ and $e_p$ be the cardinality of the residue 
field of any prime ideal of $\mathcal O_K$ above $p$ and the ramification index of $p$  in $\mathcal{O}_K$, respectively. We also set 
\[w_{q_p}(n)=\sum_{j\geq 1}\left\lfloor \frac{n}{q^j_p}\right\rfloor\]
and define for every integer $n\geq 1$, 
\[\omega_p(n)=\omega_{K,p}(n):={\left\lfloor \frac{w_{q_p}(n)}{e_p}\right\rfloor}.\]

\begin{theorem}\label{Thm-Intro2}
Suppose that $K/\Q$ is a Galois extension which is tamely ramified at each prime. Let 
$\{f_n(X)\}_{n\geq 0}$ be a $\Z$-basis of $\Int_{\Q}(\mathcal O_K)$ such that $\deg(f_n)=n$, for each $n\in\N$. 
Then we can write 
\[f_n(X)=\frac{g_n(X)}{\prod_{p}p^{\omega_p(n)}} \]
for some monic polynomial $g_n(X)$ in $\Z[X]$, 
where the product is over all primes $p$ of $\Z$. 
\end{theorem}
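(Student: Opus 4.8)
The plan is to reduce the global statement to a product of local statements, one for each prime $p$, and then invoke the tame local computation promised in Theorem \ref{equality-theorem}. The key observation is that the $p$-adic valuation of the leading coefficient of the degree-$n$ element $f_n$ of a regular basis of $\Int_\Q(\mathcal O_K)$ is, up to sign, a well-defined invariant: it equals the negative of the $p$-adic valuation of the ``$n$-th characteristic ideal'' (or ``null ideal'') of $\mathcal O_K$ inside $\Q$, i.e. the fractional ideal generated by the leading coefficients of all polynomials of degree $n$ in $\Int_\Q(\mathcal O_K)$. This invariant does not depend on the choice of regular basis. So it suffices to show that this fractional ideal is $\prod_p p^{-\omega_p(n)}\Z$, equivalently that for each prime $p$ the $p$-adic valuation of the leading coefficient of $f_n$ equals $-\omega_p(n) = -\lfloor w_{q_p}(n)/e_p\rfloor$.

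Next I would localize. Since $\Int_\Q(\mathcal O_K)\otimes_\Z \Z_p = \Int_{\Z_p}(\mathcal O_K\otimes_\Z\Z_p)$ and $\mathcal O_K\otimes_\Z\Z_p\cong\prod_{\mathfrak p\mid p}\mathcal O_{K_\mathfrak p}$, a polynomial $f\in\Q_p[X]$ lies in this ring iff $f(\mathcal O_{K_\mathfrak p})\subseteq\mathcal O_{K_\mathfrak p}$ for every prime $\mathfrak p$ of $\mathcal O_K$ above $p$; because $K/\Q$ is Galois, all the completions $K_\mathfrak p$ are isomorphic over $\Q_p$, with common residue degree giving $q_p$ and common ramification index $e_p$. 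Hence $\Int_\Q(\mathcal O_K)\otimes_\Z\Z_p = \Int_{\Q_p}(K_\mathfrak p)$ for any fixed $\mathfrak p\mid p$, and the $p$-adic valuation of the leading coefficient of $f_n$ is exactly the value computed locally for $\Int_{\Q_p}(K_\mathfrak p)$. By the tameness hypothesis, $p\nmid e_p$, so Theorem \ref{equality-theorem} applies and yields that this local valuation is $-\omega_p(n)$, where $\omega_p(n)=\lfloor w_{q_p}(n)/e_p\rfloor$ and $w_{q_p}(n)=\sum_{j\geq 1}\lfloor n/q_p^j\rfloor$ is the familiar $q_p$-adic valuation of the ``factorial'' $\prod_{i=0}^{n-1}(q_p^{\text{something}}-1)$ type quantity governing $\Int$ over a complete DVR with residue field of size $q_p$.

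Finally I would assemble the global statement. For all but finitely many primes $p$ we have $e_p=1$ and $q_p$ large enough that $q_p>n$, forcing $w_{q_p}(n)=0$ and hence $\omega_p(n)=0$; so the product $\prod_p p^{\omega_p(n)}$ is finite and defines a genuine positive integer $d_n$. Writing $f_n(X)=g_n(X)/d_n$, the local computations show that $g_n\in\Z_p[X]$ for every $p$ (so $g_n\in\Z[X]$) and that the leading coefficient of $g_n$ is a $p$-adic unit for every $p$ (so $g_n$ is, up to a global unit $\pm1$ which can be absorbed, monic). One must check that the \emph{same} normalization works simultaneously at all primes — but this is automatic, since $d_n$ clears exactly the denominator of the leading coefficient at each prime by construction, and a regular basis element is determined up to a unit and lower-degree terms.

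\textbf{The main obstacle} I anticipate is not in this reduction, which is essentially formal, but in verifying that the \emph{local} rings $\Int_{\Q_p}(K_\mathfrak p)$ genuinely have regular bases whose leading-coefficient valuations are given by $\omega_p$ — that is, the content of Theorem \ref{equality-theorem} and the surrounding local analysis. The subtlety there is that $\Int_{\Q_p}(K_\mathfrak p)$ is a ring of polynomials with coefficients in the small field $\Q_p$ but integrality imposed on the larger ring $\mathcal O_{K_\mathfrak p}$, so one cannot directly use the classical theory of $\Int$ over $\mathcal O_{K_\mathfrak p}$; instead one needs to understand how the $\mathcal O_{K_\mathfrak p}$-valued condition interacts with the constraint of $\Q_p$-rational coefficients, and it is precisely tame ramification that makes the ramified part contribute the clean floor-function factor $1/e_p$. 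Granting Theorem \ref{equality-theorem}, the global statement follows by the patching argument above.
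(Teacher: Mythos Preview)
Your argument is correct and follows essentially the same route as the paper: reduce the computation of $\mathfrak I_n(\Int_\Q(\mathcal O_K))$ to a local computation at each prime (the paper does this via Proposition~\ref{prop2.1} using localizations $\mathcal O_{K,(\p)}$, you via completion $\mathcal O_K\otimes_\Z\Z_p$; both are fine and the Galois hypothesis makes all primes above $p$ contribute identically), then invoke Theorem~\ref{equality-theorem} for the tame local value, and finally observe that $\omega_p(n)=0$ for almost all $p$ so the product is finite. The paper's own proof of the global statement (Theorem~\ref{prop3.5}) is the one-line ``combine Proposition~\ref{prop2.1} with Theorem~\ref{equality-theorem}'', which is exactly your reduction; your write-up simply makes the patching and the passage from the characteristic-ideal statement to the form $f_n=g_n/\prod_p p^{\omega_p(n)}$ with $g_n\in\Z[X]$ monic more explicit than the paper does.
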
 

The proof of the above theorem is constructive: first, we construct a basis of $\Int_{\Q_p}(\mathcal O_{K_{\p}})$,  for any prime ideal $\p$ of $\mathcal O_K$ above the rational prime $p$, from knowledge of a local basis of $\Int(\mathcal O_{K_\p})$ (here and for the rest of the paper, $K_{\p}$ denotes the $\p$-adic completion of $K$); then, we use the Chinese Remainder Theorem to construct a global basis of $\Int_\Q(\mathcal O_K)$. 

\section{A characterization of Galois extensions} \label{section2}

We introduce the following general notation, extending that of the introduction. 
Let $D$ be an integral domain with quotient field $K$ and let $A$ be a torsion-free $D$-algebra.
Let $B:=A\otimes_DK$  be the extended $K$-algebra; we have canonical embeddings $A\hookrightarrow B$ and $K\hookrightarrow B$. 
For $a\in A$ and $f\in K[X]$, the value $f(a)$ belongs to $B$, and the following definition makes sense (see also \cite{PerWer}):
\[\Int_{K}(A):=\{f\in K[X]\mid f(a)\in A, \forall a\in A\}.\]
Clearly, $\Int_{K}(A)$ is a $D$-algebra. It is easy to see that $\Int_K(A)$ is contained in the classical ring of integer-valued polynomials $\Int(D)=\{f\in K[X] \mid f(D)\subseteq D\}$ if and only if $A\cap K=D$, and this will be the case henceforth.

A sequence of polynomials $\{f_n(X)\}_{n\in\N}\subset\Int_K(A)$ which forms a basis of $\Int_K(A)$ as a $D$-module and such that $\deg(f_n)=n$ for each $n\in\N$, is called a \emph{regular basis} of $\Int_K(A)$. 
We define $\mathfrak I_n\left(\Int_{K}(A)\right)$ to be the $D$-module generated by the 
leading coefficients of all the polynomials $f\in \Int_{K}(A)$ of degree exactly $n$; we call these $D$-modules \emph{characteristic ideals}. For each $n\in\N$, by the above assumption and \cite[Proposition II.1.1]{CaCh}, $\mathfrak I_n(\Int_{K}(A))$ is a fractional ideal of $D$. Moreover, the set of characteristic ideals forms an ascending sequence:
$$D\subseteq \mathfrak I_0(\Int_{K}(A))\subseteq\ldots\subseteq \mathfrak I_n(\Int_{K}(A))\subseteq\mathfrak I_{n+1}(\Int_{K}(A))\subseteq\ldots\subseteq K.$$
The link between regular bases and characteristic ideals is given by \cite[Proposition II.1.4]{CaCh}, which says that a sequence of polynomials $\{f_n(X)\}_{n\in\N}$ of $\Int_{K}(A)$ is a regular basis if and only if, for each $n\in\N$, $f_n(X)$ is a polynomial of degree $n$ whose leading coefficient generates $\mathfrak{I}_n(\Int_{K}(A))$ as a $D$-module.
In particular, note that $\Int_\Q(\cO_K)$ and $\Int_{\Q_p}(\cO_K)$ (for $K/\Q$ and $K/\Q_p$ finite field extension) each admits a regular basis.

We fix from here to the end of this section a number field $K$ and denote by 
$\mathcal O_K$ its ring of algebraic integers. For any prime ideal $\p$ of $\mathcal O_K$, 
we denote $\mathcal O_{K,(\p)}$ the localization of $\mathcal O_K$ at $\p$, \emph{i.e.}, the localization at the multiplicative set $\mathcal O_K\setminus \p$. Moreover, for any $\Z$-module $M$ and any prime number $p$, 
we denote $M_{(p)}$ the localization at $p$, \emph{i.e.}, the 
localization at the multiplicative set $\Z\setminus p\Z$. We also denote by $K_\p$ the completion of $K$ at $\p$ and by $\mathcal O_{K,\p}$ the valuation ring of $K_{\p}$.

\begin{proposition} \label{prop2.1}
We have $\Int_\Q(\mathcal O_K)=\bigcap_\p\Int_\Q(\mathcal O_{K,(\p)})$ and 
\[\mathfrak I_n(\Int_\Q(\mathcal O_{K}))=\bigcap_{\p}\mathfrak I_n(\Int_\Q(\mathcal O_{K,(\p)})\]
 for each $n\in\N$, where the intersection is over all  prime ideals  of $\mathcal O_K$. 
\end{proposition}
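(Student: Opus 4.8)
The plan is to reduce the global statements about $\Int_\Q(\mathcal O_K)$ and its characteristic ideals to the analogous statements for the localizations $\mathcal O_{K,(\p)}$, using that $\mathcal O_K = \bigcap_\p \mathcal O_{K,(\p)}$ (intersection inside $K$ over all nonzero primes $\p$) and that localization commutes with evaluation of polynomials.

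First I would prove the ring identity $\Int_\Q(\mathcal O_K)=\bigcap_\p\Int_\Q(\mathcal O_{K,(\p)})$. The inclusion ``$\subseteq$'' is immediate since $\mathcal O_K\subseteq\mathcal O_{K,(\p)}$ for every $\p$, so a polynomial in $\Q[X]$ sending $\mathcal O_K$ into $\mathcal O_K$ a fortiori sends $\mathcal O_K$ into $\mathcal O_{K,(\p)}$; actually one must be slightly careful because $\Int_\Q(\mathcal O_{K,(\p)})$ requires $f(\mathcal O_{K,(\p)})\subseteq\mathcal O_{K,(\p)}$, not merely $f(\mathcal O_K)\subseteq\mathcal O_{K,(\p)}$ — but this follows from the fact that a polynomial with coefficients in $\Q$ which is integer-valued on $\mathcal O_K$ is also integer-valued on its localization (one can see this via continuity/density, or directly: $\mathcal O_{K,(\p)}$ is the set of elements congruent modulo any power of $\p$ to an element of $\mathcal O_K$, and $f$ preserves such congruences up to a bounded denominator). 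For the reverse inclusion ``$\supseteq$'', let $f\in\bigcap_\p\Int_\Q(\mathcal O_{K,(\p)})$ and let $a\in\mathcal O_K$; then $f(a)\in\mathcal O_{K,(\p)}$ for every $\p$, hence $f(a)\in\bigcap_\p\mathcal O_{K,(\p)}=\mathcal O_K$, so $f\in\Int_\Q(\mathcal O_K)$.

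Next I would establish the characteristic ideal identity. Again ``$\subseteq$'' follows because $\Int_\Q(\mathcal O_K)\subseteq\Int_\Q(\mathcal O_{K,(\p)})$ gives, at the level of leading coefficients of degree-$n$ polynomials, $\mathfrak I_n(\Int_\Q(\mathcal O_K))\subseteq\mathfrak I_n(\Int_\Q(\mathcal O_{K,(\p)}))$ for each $\p$, hence containment in the intersection. The content is the reverse inclusion. Since all the $\mathfrak I_n$ are fractional ideals of $\Z$, it suffices to check the inclusion of fractional ideals prime-by-prime, i.e. after localizing at each rational prime $p$; and for a fixed $p$ only the finitely many $\p$ above $p$ contribute a nontrivial constraint (for $\p$ not above $p$, $\mathfrak I_n(\Int_\Q(\mathcal O_{K,(\p)}))$ has trivial $p$-part). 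So I would show $\mathfrak I_n(\Int_\Q(\mathcal O_{K,(\p)}))_{(p)}=\mathfrak I_n(\Int_{\Q_p}(\mathcal O_{K,\p}))$ — that localizing then completing does not change the characteristic ideal — and then reduce to the statement that $\mathfrak I_n(\Int_{\Q_p}(\mathcal O_K))$ equals the intersection over $\p\mid p$ of $\mathfrak I_n(\Int_{\Q_p}(\mathcal O_{K,\p}))$, which in turn comes from $\mathcal O_K\otimes_\Z\Z_p\cong\prod_{\p\mid p}\mathcal O_{K,\p}$ and a degree-$n$ polynomial being integer-valued on a product iff it is on each factor. The key technical point throughout is the passage from ``the leading coefficient of every global degree-$n$ integer-valued polynomial lies in the intersection of the local characteristic ideals'' to ``every element of that intersection is realized as such a leading coefficient'': this uses that one may glue local degree-$n$ integer-valued polynomials with prescribed leading coefficient into a global one, which is exactly the Chinese-Remainder/approximation argument (realize the desired leading coefficient locally at the relevant primes, and use a standard global degree-$n$ polynomial in $\Int(\Z)$ elsewhere, then correct by lower-degree terms).

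The main obstacle I anticipate is precisely this realization (surjectivity) direction: one needs that a fractional ideal element lying in every local characteristic ideal is the leading coefficient of an actual element of $\Int_\Q(\mathcal O_K)$ of degree exactly $n$. I would handle it by working one prime $p$ at a time: given $c\in K=\Q$ with $v_p(c)$ at least the relevant local bound for every $\p\mid p$, pick polynomials $h_\p\in\Int_{\Q_p}(\mathcal O_{K,\p})$ of degree $n$ with leading coefficient (a unit times) $c$, use weak approximation / CRT in $\Z/p^N\Z[X]$ for $N$ large to find a single polynomial in $\Int(\Z)_{(p)}$ of degree $n$ with leading coefficient $c$ that matches each $h_\p$ closely enough to lie in $\Int_{\Q_p}(\mathcal O_{K,\p})$, and finally combine over the finitely many primes dividing the denominator of $c$ by another application of CRT. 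Assembling the finitely many local solutions and checking that the resulting global polynomial genuinely has the prescribed leading coefficient and degree $n$ is routine but is where all the care is needed; everything else is formal manipulation of intersections of fractional ideals.
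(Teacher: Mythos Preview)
Your approach is correct but differs from the paper's in a meaningful way, and in one respect is actually more complete.

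The paper does not argue directly via $\mathcal O_K=\bigcap_\p\mathcal O_{K,(\p)}$. Instead it first localizes at a rational prime $p$: using the general identity $\Int_\Q(\mathcal O_K)=\bigcap_p\Int_\Q(\mathcal O_K)_{(p)}$ together with the (cited) fact that $\Int_\Q(\mathcal O_K)_{(p)}=\Int_\Q(\mathcal O_{K,(p)})$, and only then splits $\mathcal O_{K,(p)}=\bigcap_{\p\mid p}\mathcal O_{K,(\p)}$. The virtue of this route is that the cited equality $\Int_\Q(\mathcal O_K)_{(p)}=\Int_\Q(\mathcal O_{K,(p)})$ immediately yields $\mathfrak I_n(\Int_\Q(\mathcal O_K))_{(p)}=\mathfrak I_n(\Int_\Q(\mathcal O_{K,(p)}))$, because localization at $p$ trivially commutes with taking leading coefficients; no gluing or Chinese Remainder argument is required at that stage. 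Your density/continuity justification of the inclusion $\Int_\Q(\mathcal O_K)\subseteq\Int_\Q(\mathcal O_{K,(\p)})$ is fine, but note that the paper's route sidesteps this subtlety entirely.

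Where your argument buys something is the passage from the semilocal ring $\Int_\Q(\mathcal O_{K,(p)})$ to the individual $\Int_\Q(\mathcal O_{K,(\p)})$ at the level of \emph{characteristic ideals}. The paper records the ring identity $\Int_\Q(\mathcal O_{K,(p)})=\bigcap_{\p\mid p}\Int_\Q(\mathcal O_{K,(\p)})$ and then asserts the corresponding characteristic-ideal identity, but the surjectivity direction (realizing a common leading coefficient by a single polynomial lying in every $\Int_\Q(\mathcal O_{K,(\p)})$) is exactly the point you flag and handle via approximation/CRT. In the paper's applications this gap is harmless, since for Galois $K/\Q$ all the rings $\Int_\Q(\mathcal O_{K,(\p)})$ with $\p\mid p$ coincide (as noted in the Remark following the proposition), so the $(\p)$-statement reduces to the $(p)$-statement. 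But as a proof of the proposition in full generality, your explicit gluing argument supplies a step that the paper leaves implicit.
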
 

\begin{proof}
We first observe that $\Int_{\Q}(\mathcal O_K)=\bigcap_p\Int_\Q(\mathcal O_K)_{(p)}$; here the intersection 
is over all primes of $\Z$.  Then one observes that 
$\Int_\Q(\mathcal O_K)_{(p)}=\Int_{\Q}(\mathcal O_{K,(p)})$ (see for example \cite{Wer}). We conclude 
that $\mathfrak I_n(\Int_\Q(\mathcal O_{K,\p})_{(p)})$ is equal to $\mathfrak I_n(\Int_\Q(\mathcal O_{K,(p)})$, 
showing the second part. Further, $\mathcal O_{K,(p)}=\bigcap_{\p\mid p}\mathcal O_{K,(\p)}$, where 
$\mathcal O_{K,(\p)}$ is the localization of $\mathcal O_K$ at $\p$, and the intersection is over all 
prime ideals $\p$ of $\mathcal{O}_K$ which lie above $p$. Therefore 
\begin{equation}\label{int}
\Int_\Q(\mathcal O_{K,(p)})=\bigcap_{\p\mid p}\Int_\Q(\mathcal O_{K,(\p)})\end{equation}
and the result follows. \qed
\end{proof}

\begin{remark}
Note that, if $K/\Q$ is Galois, then $\Int_\Q(\mathcal O_{K,(\p)})$, for $\p\mid p$, are all equal because $\Gal(K/\Q)$ acts transitively on the set of rings $\{\mathcal O_{K,(\p)}:\p\mid p\}$.  Therefore \eqref{int} reads as 
\[\Int_\Q(\mathcal O_{K,(p)})=\Int_\Q(\mathcal O_{K,(\p)})\]
for each $\p\mid p$.  A similar  argument has been used in \cite[Proposition 1.10]{PerFinite}.
\end{remark}

In order to determine some relations of containments between the rings $\Int_{\Q}(\mathcal O_{K,(\p)})$, we introduce the following  object: given an extension of commutative rings $R\subseteq S$, we consider the null ideal of $S$ over $R$, that is, $N_R(S)=\{g\in R[X]\mid g(S)=0\}\subseteq R[X]$  (for results connected to null ideals see for example \cite{PerDivDiff, PerWerProperly, Wer}).
\begin{proposition}\label{null ideals}
Let $K$ be a number field and let $p\in\Z$ be a prime. Let $\p\subset \mathcal O_K$ be a prime ideal above $p$ with 
ramification index $e$ and residue class degree $f$. Then
$$N_{\mathbb{F}_p}(\mathcal O_K/\p^{e})=((X^{p^f}-X)^e)$$
\end{proposition}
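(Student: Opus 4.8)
The plan is to compute the null ideal $N_{\mathbb{F}_p}(\mathcal O_K/\p^e)$ directly by analysing the ring $R:=\mathcal O_K/\p^e$. First I would identify $R$ concretely: since $\p$ has ramification index $e$ and residue degree $f$ over $p$, the completion $\mathcal O_{K,\p}$ is a local ring with maximal ideal $\p\mathcal O_{K,\p}$, residue field $\mathbb{F}_{p^f}$, and $\mathcal O_K/\p^e \cong \mathcal O_{K,\p}/\p^e\mathcal O_{K,\p}$ is a finite local $\mathbb{F}_p$-algebra whose unique maximal ideal $\mathfrak m = \p R$ satisfies $\mathfrak m^e = 0$, $\mathfrak m^{e-1}\neq 0$, with $R/\mathfrak m \cong \mathbb{F}_{p^f}$. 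I would note that $\mathbb{F}_p \hookrightarrow R$ via the structure map (valid since $R$ has characteristic $p$), so $N_{\mathbb{F}_p}(R)$ is an ideal of $\mathbb{F}_p[X]$ in the usual sense. The strategy then splits into two inclusions: showing $(X^{p^f}-X)^e \subseteq N_{\mathbb{F}_p}(R)$ and showing the reverse.

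For the inclusion $(X^{p^f}-X)^e \subseteq N_{\mathbb{F}_p}(R)$: given any $a\in R$, let $\bar a$ be its image in $R/\mathfrak m \cong \mathbb{F}_{p^f}$. Then $\bar a^{p^f}=\bar a$, so $a^{p^f}-a \in \mathfrak m$. Hence for the polynomial $h(X)=X^{p^f}-X$ we have $h(a)\in\mathfrak m$ for every $a\in R$, and therefore $h(a)^e \in \mathfrak m^e = 0$. This shows $h(X)^e$ kills every element of $R$, i.e. $h(X)^e \in N_{\mathbb{F}_p}(R)$; since $N_{\mathbb{F}_p}(R)$ is an ideal this gives the containment.

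For the reverse inclusion, I would show $N_{\mathbb{F}_p}(R) \subseteq ((X^{p^f}-X)^e)$. Since $\mathbb{F}_p[X]$ is a PID, $N_{\mathbb{F}_p}(R)=(m(X))$ for a monic generator $m(X)$, and it suffices to prove that $(X^{p^f}-X)^e$ divides $m(X)$; equivalently, since $X^{p^f}-X=\prod_{\alpha\in\mathbb{F}_{p^f}}(X-\alpha)$ is squarefree over $\overline{\mathbb{F}_p}$ (indeed over $\mathbb{F}_{p^f}$), that $(X-\alpha)^e \mid m(X)$ for each $\alpha\in\mathbb{F}_{p^f}$. Fix $\alpha$ and pick $a\in R$ with $\bar a = \alpha$ and with $a-a' \in \mathfrak m$ generating $\mathfrak m/\mathfrak m^2$ appropriately — more precisely, I want an element $t\in\mathfrak m$ with $t^{e-1}\neq 0$ (a uniformizer of $R$), and I would evaluate $m$ at $a:=\tilde\alpha + t$ where $\tilde\alpha\in R$ is a lift of $\alpha$; Taylor-expanding $m(X)$ around $\tilde\alpha$, write $m(\tilde\alpha+t)=\sum_{i\geq 0} c_i t^i$ with $c_i\in R$ the divided-coefficient data of $m$ at $\tilde\alpha$. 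Using that $m$ vanishes not only at $a$ but at $\tilde\alpha + ut$ for all $u\in\mathbb{F}_p$ (or better, at $\tilde\alpha + s$ for all $s\in\mathfrak m$), I extract that the coefficients $c_0,\dots,c_{e-1}$ all lie in $\mathfrak m$, hence in the residue field the polynomial $\bar m(X)\in\mathbb{F}_{p^f}[X]$ has a zero of order at least $e$ at $\alpha$. Running over all $\alpha\in\mathbb{F}_{p^f}$ forces $(X^{p^f}-X)^e \mid \bar m(X)$, and since $m$ has coefficients already in $\mathbb{F}_p$ and $(X^{p^f}-X)^e\in\mathbb{F}_p[X]$, a degree/divisibility argument over $\mathbb{F}_p$ upgrades this to $(X^{p^f}-X)^e \mid m(X)$ in $\mathbb{F}_p[X]$.

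The main obstacle I anticipate is the order-of-vanishing argument in the second inclusion: one must show that requiring $m(a)=0$ for \emph{all} $a\in R$ (not just for a single residue-class representative) forces multiplicity $\geq e$ at each $\alpha$, and this needs a careful bookkeeping of the filtration $\mathfrak m \supset \mathfrak m^2 \supset \cdots \supset \mathfrak m^e=0$ together with the fact that $t^i$ for $0\le i\le e-1$ are $R/\mathfrak m$-independent in the associated graded, so that the "coefficients" $c_i\bmod\mathfrak m$ are well-defined and can be forced to vanish one at a time by varying the uniformizer-part of the argument. A clean way to organize this is to reduce, via $\mathcal O_K/\p^e \cong \mathcal O_{K,\p}/\p^e$, to the complete local case and then use that $\mathcal O_{K,\p}$ contains (after choosing an unramified lift) a copy of the ring of Witt-vector-type coefficients so that every element is $\tilde\alpha + (\text{higher order})$; this makes the Taylor expansion rigorous. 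I would also double-check the edge cases $e=1$ (where the statement reduces to the classical $N_{\mathbb{F}_p}(\mathbb{F}_{p^f})=(X^{p^f}-X)$) and small $f$.
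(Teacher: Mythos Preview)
Your approach is sound and can be completed, but it is organized differently from the paper's. The paper proceeds by induction on $e$: the chain of surjections $\mathcal O_K/\p^e \twoheadrightarrow \cdots \twoheadrightarrow \mathcal O_K/\p$ gives containments among the null ideals, and assuming $N_{\mathbb F_p}(\mathcal O_K/\p^{e-1})=((X^q-X)^{e-1})$ (with $q=p^f$), the monic generator $g$ of $N_{\mathbb F_p}(\mathcal O_K/\p^e)$ must divide $(X^q-X)^e$ and be divisible by $(X^q-X)^{e-1}$, hence $g=(X^q-X)^{e-1}\prod_{\gamma\in S}(X-\gamma)$ for some $S\subseteq\mathbb F_q$. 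After translating so that $0\notin S$, evaluating at a single element $t\in\p/\p^e$ of nilpotency index exactly $e$ forces $S=\mathbb F_q$, since $(t^q-t)^{e-1}=t^{e-1}\cdot(\text{unit})\neq 0$ while $\prod_{\gamma\in S}(t-\gamma)$ is a unit. The induction sidesteps your Taylor-expansion bookkeeping entirely.

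Your direct route also works, and more cleanly than your sketch suggests. The structural fact you are circling around is that $R=\mathcal O_K/\p^e\cong\mathbb F_q[t]/(t^e)$ as $\mathbb F_q$-algebras: since $R$ has characteristic $p$ and is Artinian local with perfect residue field $\mathbb F_q$, Hensel's lemma applied to the separable polynomial $X^q-X$ gives a section $\mathbb F_q\hookrightarrow R$ (Witt vectors are unnecessary here), and then $\{1,t,\dots,t^{e-1}\}$ is an $\mathbb F_q$-basis of $R$ because $|R|=q^e$ and any nontrivial dependence $\sum_{i\geq j} a_it^i=0$ with $a_j\neq 0$ would give $t^j\cdot(\text{unit})=0$. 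With this in hand, expand $m(X)=\sum_i c_i(X-\alpha)^i$ in $\mathbb F_q[X]$ (no ``divided-coefficient data'' is needed---this is just the shift of a polynomial, with integer binomial coefficients) and evaluate once at $\alpha+t\in R$: you get $\sum_{i<e}c_it^i=0$, hence $c_0=\cdots=c_{e-1}=0$ by linear independence. There is no need to vary over all $s\in\mathfrak m$ or to argue separately that the $c_i$ lie in $\mathfrak m$. What the paper's inductive argument buys is that it never has to invoke the section $\mathbb F_q\hookrightarrow R$; what your argument buys is a non-inductive proof and a transparent explanation of where the exponent $e$ comes from.
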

\begin{proof}
Since $\pi:\mathcal O_K/\p^{e}\twoheadrightarrow \mathcal O_K/\p^{e-1}\twoheadrightarrow\ldots\twoheadrightarrow \mathcal O_K/\p\cong\mathbb{F}_{p^f}$ and $\mathbb{F}_p$ embeds in all of these rings (because $\p^{i}\cap\Z=p\Z=\p\cap\Z$, for all $i=1,\ldots,e$) we have
$$\xymatrix{
\mathcal O_K/\p^{e}\ar@{>>}[r]&\mathcal O_K/\p^{e-1}\ar@{>>}[r]&\ldots\ar@{>>}[r]&\mathcal O_K/\p\\
&&&\mathbb{F}_p\ar@{^{(}->}[u]\ar@{^{(}->}[ull]\ar@{^{(}->}[ulll]
}$$
so, in particular, we have the following chain of containments between these ideals of $\mathbb{F}_p[X]$:
$$N_{\mathbb{F}_p}(\mathcal O_K/\p^{e})\subseteq N_{\mathbb{F}_p}(\mathcal O_K/\p^{e-1})\subseteq\ldots\subseteq N_{\mathbb{F}_p}(\mathcal O_K/\p).$$
Since $\mathcal O_K/\p$ is a finite field with $p^f$ elements, the ideal $N_{\mathbb{F}_p}(\mathcal O_K/\p)$ is generated by the polynomial 
$X^{p^f}-X$. The proof proceeds by induction on $e$. Suppose that $N_{\mathbb{F}_p}(\mathcal O_K/\p^{e-1})$ is generated by $(X^{p^f}-X)^{e-1}$. It is easy to see that $(X^{p^f}-X)^e$ is contained in $N_{\mathbb{F}_p}(\mathcal O_K/\p^{e})$. Therefore, the latter ideal is generated by a polynomial $g\in\mathbb{F}_p[X]$ which is zero on all the elements of $\mathcal O_K/\p^e$ of the form
$$g(X)=(X^{p^f}-X)^{e-1}h(X)=F_q(X)^{e-1} \prod_{\gamma\in S}(X-\gamma)$$
for some $S\subseteq\mathbb{F}_{p^f}=\mathbb{F}_q$. Suppose that $S$ is strictly contained in $\mathbb{F}_{q}$ and let $\overline{\gamma}\in\mathbb{F}_{q}\setminus S$. Without loss of generality, we may assume that $\overline{\gamma}=0$ (apply the automorphism $X\mapsto X-\gamma$, if necessary; this is an automorphism for $\mathbb{F}_{p^f}$ and $\mathcal O_K/\p^e$).

Let $t\in P/P^e\subset \mathcal O_K/\p^e$ such that its index of nilpotency is $e$ (that is, $t^e=0$ but $t^{e-1}\not=0$). Then $F_q(t)^{e-1}=t^{e-1}\cdot(t^{q-1}-1)^{e-1}$ is not zero in $O_K/\p^e$, because $t^{q-1}-1$ is a unit of $O_K/\p^e$ (because $\p/\p^e$ is the Jacobson radical of $O_K/\p^e$).

In the same way, $h(t)=\prod_{\gamma\in S}(t-\gamma)$ is not in the kernel of $\pi:\mathcal O_K/\p^{e}\twoheadrightarrow \mathcal O_K/\p^{e-1}$, which is $\p/\p^e$, because modulo $\p$, $h(t)$ is not zero ($\pi(h(t))=h(\pi(t))=h(0)\not=0$, because $0\notin S$). Hence, $h(t)$ is invertible, so that $g(t)=F_q(t)^{e-1}\cdot h(t)$ is not zero, contradiction.
\qed
\end{proof}

\begin{proposition}\label{comparison IntQrings}
Let $K,K'$ be number fields, with prime ideals $\p,\p'$ of residual characteristic $p$, respectively, and with ramification index and residue class degree equal to $e,f$ and $e',f'$, respectively.  Suppose that 
$$\Int_{\Q}(\mathcal O_{K',(\p')})\subseteq\Int_{\Q}(\mathcal O_{K,(\p)})$$
Then $\p\cap\Z=p\Z=\p'\cap\Z$, $f|f'$ and $e\leq e'$.  In particular, if the above containment is an equality, we have that $\p\cap\Z=p\Z=\p'\cap\Z$, $f=f'$ and $e= e'$.
\end{proposition}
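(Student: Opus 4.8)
The plan is to compare the two rings through their parts with denominator $p$, which Proposition \ref{null ideals} describes completely, and to read off the stated constraints from a divisibility in $\mathbb{F}_p[X]$. First I would set up a local dictionary. Let $L$ be a number field and $\mathfrak{q}$ a prime of $\mathcal{O}_L$ above a rational prime $p$, with ramification index $\epsilon$ and residue degree $\phi$. Since $\mathcal{O}_{L,(\mathfrak{q})}$ is a discrete valuation ring in which $p$ generates the ideal $\mathfrak{q}^{\epsilon}\mathcal{O}_{L,(\mathfrak{q})}$, there is a ring isomorphism $\mathcal{O}_{L,(\mathfrak{q})}/p\mathcal{O}_{L,(\mathfrak{q})}\cong \mathcal{O}_L/\mathfrak{q}^{\epsilon}$. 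Hence, for $h\in\Z[X]$, the polynomial $h(X)/p$ belongs to $\Int_\Q(\mathcal{O}_{L,(\mathfrak{q})})$ if and only if $h(a)\in p\mathcal{O}_{L,(\mathfrak{q})}$ for every $a\in\mathcal{O}_{L,(\mathfrak{q})}$, equivalently if and only if the reduction $\overline{h}\in\mathbb{F}_p[X]$ lies in the null ideal $N_{\mathbb{F}_p}(\mathcal{O}_L/\mathfrak{q}^{\epsilon})$, which by Proposition \ref{null ideals} equals $\big((X^{p^{\phi}}-X)^{\epsilon}\big)$.

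Next I would pin down the residual characteristics. The constant polynomials lying in $\Int_\Q(\mathcal{O}_{L,(\mathfrak{q})})$ are exactly $\mathcal{O}_{L,(\mathfrak{q})}\cap\Q=\Z_{(p)}$, so if $p,p'$ denote the residual characteristics of $\p,\p'$, the containment $\Int_\Q(\mathcal{O}_{K',(\p')})\subseteq\Int_\Q(\mathcal{O}_{K,(\p)})$ forces $\Z_{(p')}\subseteq\Z_{(p)}$, hence $p=p'$ (otherwise $1/p'$ would belong to the smaller ring but not to the larger one). This gives $\p\cap\Z=p\Z=\p'\cap\Z$.

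Then I would apply the dictionary with $(L,\mathfrak{q})=(K',\p')$: choosing $h_0\in\Z[X]$ that reduces to $(X^{p^{f'}}-X)^{e'}$ modulo $p$, we get $h_0/p\in\Int_\Q(\mathcal{O}_{K',(\p')})$, hence $h_0/p\in\Int_\Q(\mathcal{O}_{K,(\p)})$ by hypothesis; applying the dictionary again with $(L,\mathfrak{q})=(K,\p)$ yields that $(X^{p^f}-X)^e$ divides $(X^{p^{f'}}-X)^{e'}$ in $\mathbb{F}_p[X]$. Since both $X^{p^f}-X=\prod_{\alpha\in\mathbb{F}_{p^f}}(X-\alpha)$ and $X^{p^{f'}}-X=\prod_{\beta\in\mathbb{F}_{p^{f'}}}(X-\beta)$ are separable with root sets $\mathbb{F}_{p^f}$ and $\mathbb{F}_{p^{f'}}$, this divisibility forces $\mathbb{F}_{p^f}\subseteq\mathbb{F}_{p^{f'}}$, i.e.\ $f\mid f'$; and the multiplicity of any root $\alpha\in\mathbb{F}_{p^f}$, which is $e$ on the left and $e'$ on the right, yields $e\le e'$. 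If the containment is an equality, the same argument with $(K,\p)$ and $(K',\p')$ interchanged gives $f'\mid f$ and $e'\le e$, whence $f=f'$ and $e=e'$.

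The only step carrying real content is the first one: identifying the part of $\Int_\Q(\mathcal{O}_{L,(\mathfrak{q})})$ with denominator $p$ with the null ideal $N_{\mathbb{F}_p}(\mathcal{O}_L/\mathfrak{q}^{\epsilon})$ and invoking its explicit form from Proposition \ref{null ideals}. Once that dictionary is in place, the remainder is elementary factorization in $\mathbb{F}_p[X]$, and polynomials with higher powers of $p$ in the denominator are not even needed.
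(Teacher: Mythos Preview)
Your proof is correct and follows essentially the same route as the paper's: both arguments identify the residual characteristic via the constants $\Int_\Q(\mathcal O_{L,(\mathfrak q)})\cap\Q=\Z_{(p)}$, then push the polynomial $(X^{p^{f'}}-X)^{e'}/p$ through the containment and invoke Proposition~\ref{null ideals} to obtain the divisibility $(X^{p^f}-X)^e\mid (X^{p^{f'}}-X)^{e'}$ in $\mathbb F_p[X]$, from which separability gives $f\mid f'$ and $e\le e'$. Your explicit ``dictionary'' step (identifying polynomials with denominator $p$ with the null ideal via $\mathcal O_{L,(\mathfrak q)}/p\mathcal O_{L,(\mathfrak q)}\cong \mathcal O_L/\mathfrak q^\epsilon$) is exactly what the paper uses implicitly when it reduces modulo $p$.
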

\begin{proof}
Suppose that $\p\cap\Z=p\Z$ and $\p'\cap\Z=p'\Z$. Observe that 
\[\Int_{\Q}(\mathcal O_{K,(\p)})\cap \Q=(\Int(\mathcal O_{K,(\p)})\cap K)\cap \Q=\mathcal O_{K,(\p)}\cap\Q=\Z_{(p)}\]
and analogously for $\p'$ and $p'$. Therefore 
\[\Int_{\Q}(\mathcal O_{K',(\p')})\cap\Q=\Z_{(p')}\subseteq\Int_{\Q}(\mathcal O_{K,(\p)})\cap\Q=\Z_{(p)},\]
so that $p=p'$. 

By Proposition \ref{null ideals}, the containment of the hypothesis implies that
\begin{equation}\label{1}
\frac{(X^{p^{f'}}-X)^{e'}}{p}\in \Int_{\Q}(\mathcal O_{K,(\p)}).
\end{equation} 
In particular, modulo $p$, we have 
$$(X^{p^{f'}}-X)^{e'}\in N_{\mathbb{F}_p}(\mathcal O_K/\p^{e})=((X^{p^f}-X)^e),$$
again by Proposition \ref{null ideals}. It follows that $(X^{p^{f'}}-X)^{e'}\in (X^{p^f}-X)$ and since the latter is a radical ideal (because $X^{p^f}-X$ is a separable polynomial), this means that $X^{p^{f'}}-X$ belongs to $(X^{p^f}-X)$ which is equivalent to $\mathbb{F}_{p^f}\subseteq\mathbb{F}_{p^{f'}}$ which holds if and only if $f|f'$, as claimed.

In the same way, since $X^{p^{f'}}-X$ is a separable polynomial (every irreducible factor appears with multiplicity $1$ in the factorization of $X^{p^{f'}}-X$ over $\mathbb{F}_p$), we deduce that $e\leq e'$.\qed
\end{proof}

\vskip0.5cm
We recall that, by a result of Gerboud (see \cite{Gerb} and also \cite[Prop. IV.3.3]{CaCh}) we have
\begin{equation}\label{Gerb}
\Int(\Z_{(p)},\mathcal O_{K,(\p)})=\{f\in K[X] \mid f(\Z_{(p)})\subseteq \mathcal O_{K,(\p)}\}=\Int(\Z_{(p)})\cdot \mathcal O_{K,(\p)}
\end{equation}
\vskip0.3cm
\begin{lemma}\label{IntQOKPef1}
Let $K$ be a number field and let $\p\subset \mathcal O_K$ be a prime ideal which lies above a prime $p\in\Z$. Let $e=e(\p|p)$ and $f=f(\p|p)$ be the ramification index and residue class degree, respectively. Then the following conditions are equivalent:
 \begin{itemize}
  \item[i)] $\Int(\Z_{(p)})\subseteq \Int(\mathcal O_{K,(\p)})$.
  \item[ii)] $\Int(\Z_{(p)},\mathcal O_{K,(\p)})=\Int(\mathcal O_{K,(\p)})$ 
  \item[iii)] $\Int_{\Q}(\mathcal O_{K,(\p)})=\Int(\Z_{(p)})$.  
  \item[iv)] $e=f=1$.
\end{itemize}
If any of this equivalent conditions holds, then
\begin{align*}
\Int(\Z_{(p)})\cdot \mathcal O_{K,(\p)}&=\Int(\mathcal O_{K,(\p)}).
\end{align*}

\end{lemma}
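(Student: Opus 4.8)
The plan is to prove the cycle of implications $i)\Rightarrow ii)\Rightarrow iv)\Rightarrow iii)\Rightarrow i)$ (or a convenient variant), with $iv)$ as the hinge, and to deduce the final displayed equality from Gerboud's formula \eqref{Gerb}. The implication $i)\Rightarrow ii)$ is essentially formal: since $\Int(\mathcal O_{K,(\p)})\subseteq\Int(\Z_{(p)},\mathcal O_{K,(\p)})$ always holds (a polynomial sending $\mathcal O_{K,(\p)}$ into itself in particular sends $\Z_{(p)}$ into $\mathcal O_{K,(\p)}$), it suffices to observe that $\Int(\Z_{(p)},\mathcal O_{K,(\p)})=\Int(\Z_{(p)})\cdot\mathcal O_{K,(\p)}$ by \eqref{Gerb}, and $i)$ says $\Int(\Z_{(p)})\subseteq\Int(\mathcal O_{K,(\p)})$, hence $\Int(\Z_{(p)})\cdot\mathcal O_{K,(\p)}\subseteq\Int(\mathcal O_{K,(\p)})$ as $\Int(\mathcal O_{K,(\p)})$ is an $\mathcal O_{K,(\p)}$-module; this gives the reverse containment and hence $ii)$.

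The key step is $ii)\Rightarrow iv)$, and this is where I expect the main obstacle. The idea is to exploit the null-ideal computation of Proposition \ref{null ideals} together with the explicit description of $\Int_\Q(\mathcal O_{K,(\p)})$ via its characteristic ideals. Suppose $e>1$ or $f>1$. If $f>1$, then the residue field $\mathbb{F}_{p^f}$ strictly contains $\mathbb{F}_p$, and one produces a polynomial in $\Int(\Z_{(p)})$ (hence of the form $g(X)/p^{m}$ with $g\in\Z_{(p)}[X]$) whose reduction mod $\p$ — equivalently mod $p$, since $\p\cap\Z=p\Z$ — fails to lie in the null ideal $N_{\mathbb{F}_p}(\mathcal O_K/\p)=(X^{p^f}-X)$; indeed the classical generator $X(X-1)\cdots(X-(p-1))/p$ of $\Int(\Z_{(p)})$ in degree $p$ reduces mod $p$ to $X^p-X$, which does not vanish on all of $\mathbb{F}_{p^f}$ when $f>1$, so this polynomial lies in $\Int(\Z_{(p)})$ but not in $\Int(\mathcal O_{K,(\p)})$, contradicting $ii)$ (via $i)$, which $ii)$ implies by the same module argument run backwards). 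If $f=1$ but $e>1$, the same polynomial $X^p-X$ reduces to something vanishing on $\mathcal O_K/\p=\mathbb{F}_p$ but, by Proposition \ref{null ideals}, $N_{\mathbb{F}_p}(\mathcal O_K/\p^e)=((X^p-X)^e)$ requires the $e$-th power, so $X^p-X \notin N_{\mathbb{F}_p}(\mathcal O_K/\p^e)$ and again $X(X-1)\cdots(X-(p-1))/p\in\Int(\Z_{(p)})\setminus\Int(\mathcal O_{K,(\p)})$; this is exactly the phenomenon flagged in the introduction. Conversely $iv)\Rightarrow iii)$: if $e=f=1$ then $\mathcal O_{K,(\p)}$ has the same residue field and value group behaviour over $\Z_{(p)}$ as $\Z_{(p)}$ itself at $p$, so $\Int_\Q(\mathcal O_{K,(\p)})$ and $\Int(\Z_{(p)})$ have the same characteristic ideals at $p$ — both are governed by $N_{\mathbb{F}_p}(\mathcal O_K/\p^i)=((X^p-X)^i)=N_{\mathbb{F}_p}(\Z/p^i)$ — whence equality of the two rings by the regular-basis/characteristic-ideal correspondence recalled before Proposition \ref{prop2.1}. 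Finally $iii)\Rightarrow i)$ is immediate since $\Int(\Z_{(p)})\subseteq\Int_\Q(\mathcal O_{K,(\p)})\subseteq\Int(\mathcal O_{K,(\p)})$ trivially (the middle ring consists of polynomials with $\Q$-coefficients sending $\mathcal O_{K,(\p)}$ into itself).

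The hard part will be making the "same characteristic ideals" argument in $iv)\Rightarrow iii)$ precise: one must check not only the residue-field coincidence but that the $p$-adic valuations of leading coefficients match in every degree, which is where one invokes that $e=1$ forces the value groups to agree and $f=1$ forces the counting of "how many residues are hit" to agree, so that the local null ideals $N_{\mathbb{F}_p}(\mathcal O_K/\p^i)$ coincide with $N_{\mathbb{F}_p}(\Z/p^i\Z)$ for all $i$. The closing assertion — that under these equivalent conditions $\Int(\Z_{(p)})\cdot\mathcal O_{K,(\p)}=\Int(\mathcal O_{K,(\p)})$ — then drops out directly: the containment $\subseteq$ is what we proved in $i)\Rightarrow ii)$, and $\supseteq$ holds because by $iii)$ every $f\in\Int(\mathcal O_{K,(\p)})$ is $\Z_{(p)}$-linearly generated by a regular basis of $\Int(\Z_{(p)})$ after clearing denominators — more simply, $\Int(\mathcal O_{K,(\p)})=\Int(\Z_{(p)})\cdot\mathcal O_{K,(\p)}$ is precisely $\Int(\Z_{(p)},\mathcal O_{K,(\p)})=\Int(\mathcal O_{K,(\p)})$, which is $ii)$.
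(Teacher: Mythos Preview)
Your cycle $i)\Rightarrow ii)\Rightarrow iv)\Rightarrow iii)\Rightarrow i)$ is sound and the key obstruction polynomial $X(X-1)\cdots(X-(p-1))/p$ is exactly the right witness for $ii)\Rightarrow iv)$. The paper organizes things differently: it first observes $i)\Leftrightarrow iii)$ directly (since $\Int_{\Q}(\mathcal O_{K,(\p)})\subseteq\Int(\Z_{(p)})$ always, $iii)$ is equivalent to the reverse inclusion, which is $i)$ because $\Int(\Z_{(p)})\subseteq\Q[X]$), then gets $i)\Leftrightarrow ii)$ from Gerboud as you do, and finally invokes Proposition~\ref{comparison IntQrings} for $iii)\Leftrightarrow iv)$. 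Your argument for $ii)\Rightarrow iv)$ is essentially a specialization of the proof of Proposition~\ref{comparison IntQrings} to the pair $(K',\p')=(\Q,p\Z)$, so at the core the two approaches coincide; the paper simply quotes the general result rather than redoing it.

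Two points to tighten. First, in your $iii)\Rightarrow i)$ you write that $\Int(\Z_{(p)})\subseteq\Int_{\Q}(\mathcal O_{K,(\p)})$ holds ``trivially''; it does not---that inclusion is precisely what $iii)$ asserts (the trivial inclusion goes the other way). The implication is of course still correct once you use $iii)$. Second, your plan for $iv)\Rightarrow iii)$ via ``equal characteristic ideals governed by $N_{\mathbb F_p}(\mathcal O_K/\p^i)$'' is more circuitous than needed and, as stated, not quite on target: what controls membership of $g/p^m$ in $\Int_{\Q}(\mathcal O_{K,(\p)})$ is the null ideal over $\Z/p^m$, not over $\mathbb F_p$. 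The clean fix is the one you are implicitly reaching for: when $e=f=1$, $p$ is a uniformizer of $\mathcal O_{K,(\p)}$ with residue field $\mathbb F_p$, so the inclusion $\Z/p^m\hookrightarrow\mathcal O_{K,(\p)}/p^m\mathcal O_{K,(\p)}$ is an isomorphism for every $m$ (both sides have $p^m$ elements); hence $g(\Z_{(p)})\subseteq p^m\Z_{(p)}$ iff $g(\mathcal O_{K,(\p)})\subseteq p^m\mathcal O_{K,(\p)}$, giving $\Int(\Z_{(p)})=\Int_{\Q}(\mathcal O_{K,(\p)})$ directly, without passing through characteristic ideals.
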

\begin{proof} Obviously, conditions i) and iii) are equivalent, since $\Int_{\Q}(\mathcal O_{K,(\p)})$ is always contained in $\Int(\Z_{(p)})$. 

If i) holds, then by (\ref{Gerb}) above we have $\Int(\Z_{(p)},\mathcal O_{K,(\p)})\subseteq\Int(\mathcal O_{K,(\p)})$, which is condition ii), since we always have the containment $\Int(\Z_{(p)},\mathcal O_{K,(\p)})\supseteq\Int(\mathcal O_{K,(\p)})$. Conversely, if condition  ii) holds, then again by (\ref{Gerb}) above we have $\Int(\Z_{(p)})\subseteq \Int(\mathcal O_{K,(\p)})$.

The equivalence between iii) and iv) follows immediately from Proposition \ref{comparison IntQrings}. \qed
\end{proof}

\begin{corollary}\label{ptotallysplit}
Let $K$ be a number field and let $p\in\Z$ be a prime. Then the following conditions are equivalent:
\begin{itemize}
\item[i)] $\Int(\Z_{(p)})=\Int_{\Q}(\mathcal O_{K,(p)})$.
\item[ii)] $p$ is totally split in $\mathcal O_K$.
\item[iii)] $\frac{X^p-X}{p}\in \Int_{\Q}(\mathcal O_{K})$.
\end{itemize}
\end{corollary}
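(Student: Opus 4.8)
The plan is to prove the three equivalences by passing between the global ring $\Int_\Q(\mathcal O_K)$ and its local components $\Int_\Q(\mathcal O_{K,(\p)})$, using the results already established above. First I would handle the equivalence of (i) and (ii). By the remark following Proposition \ref{prop2.1}, when $K/\Q$ is Galois the rings $\Int_\Q(\mathcal O_{K,(\p)})$ for $\p\mid p$ are all equal to $\Int_\Q(\mathcal O_{K,(p)})$; but for the present corollary we do not even need $K$ Galois, since by \eqref{int} we have $\Int_\Q(\mathcal O_{K,(p)})=\bigcap_{\p\mid p}\Int_\Q(\mathcal O_{K,(\p)})$ in general. Thus $\Int(\Z_{(p)})=\Int_\Q(\mathcal O_{K,(p)})$ holds if and only if $\Int(\Z_{(p)})\subseteq\Int_\Q(\mathcal O_{K,(\p)})$ for every $\p\mid p$ (the reverse containment being automatic), and by Lemma \ref{IntQOKPef1} (equivalence of (i) and (iv) there) this happens exactly when $e(\p\mid p)=f(\p\mid p)=1$ for all $\p\mid p$, i.e.\ when $p$ is totally split in $\mathcal O_K$.

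Next I would show (ii) $\Leftrightarrow$ (iii). The polynomial $\frac{X^p-X}{p}$ lies in $\Int_\Q(\mathcal O_K)$ if and only if it lies in $\Int_\Q(\mathcal O_{K,(\p)})$ for every prime ideal $\p$ of $\mathcal O_K$ (using Proposition \ref{prop2.1}); for primes $\p$ not lying above $p$ this membership is automatic since the denominator $p$ is a unit in $\mathcal O_{K,(\p)}$. So the condition reduces to: for each $\p\mid p$, the reduction of $X^p-X$ modulo $\p^{e}$ lies in $N_{\mathbb F_p}(\mathcal O_K/\p^e)$, which by Proposition \ref{null ideals} equals $((X^{p^f}-X)^e)$ with $e=e(\p\mid p)$, $f=f(\p\mid p)$. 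Comparing degrees, or arguing as in the proof of Proposition \ref{comparison IntQrings} via separability, $X^p-X\in ((X^{p^f}-X)^e)$ in $\mathbb F_p[X]$ forces $e=1$ and $f\mid 1$, hence $f=1$; conversely if $e=f=1$ then $X^{p^f}-X=X^p-X$ generates the null ideal and the membership holds. Therefore $\frac{X^p-X}{p}\in\Int_\Q(\mathcal O_K)$ iff $e(\p\mid p)=f(\p\mid p)=1$ for all $\p\mid p$, which is again the statement that $p$ is totally split.

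I expect the main subtlety to be the bookkeeping in the (ii) $\Leftrightarrow$ (iii) step: one must be careful that $\frac{X^p-X}{p}$ has $\Q$-coefficients (so it is a legitimate element of $\Q[X]$ to test) and that the local condition at $\p$ really is governed by the null ideal modulo $\p^e$ rather than some higher power of $\p$ — this is exactly the content of Proposition \ref{null ideals} and the description of $\Int_\Q(\mathcal O_{K,(\p)})$ in terms of polynomials whose numerators vanish on $\mathcal O_K/\p^{v_\p(p)}=\mathcal O_K/\p^e$. Everything else is a direct assembly of Lemma \ref{IntQOKPef1}, Propositions \ref{prop2.1}, \ref{null ideals}, and \ref{comparison IntQrings}, together with the elementary observation that ``totally split'' means precisely $e=f=1$ at every prime above $p$. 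Note that this corollary is the key mechanism behind the failure example in the introduction, where a non-totally-split prime $p$ produces a polynomial in $\Int(\Z)$ but not in $\Int_\Q(\mathcal O_K)$.
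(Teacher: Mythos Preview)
Your proof is correct. The treatment of (i) $\Leftrightarrow$ (ii) is essentially identical to the paper's, combining \eqref{int} with Lemma \ref{IntQOKPef1}.

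For (ii) $\Leftrightarrow$ (iii) you take a somewhat different route from the paper. You reduce the membership of $\frac{X^p-X}{p}$ to the condition $X^p-X\in N_{\mathbb F_p}(\mathcal O_K/\p^e)=((X^{p^f}-X)^e)$ via Proposition \ref{null ideals}, and then a degree comparison (the generator has degree $e\,p^f\geq p$ with equality iff $e=f=1$) finishes the argument cleanly. The paper instead argues directly and self-containedly: for (iii) $\Rightarrow$ (ii) it works inside each factor $\mathcal O_K/\p_i^{e_i}$, uses the Jacobson radical to force $e_i=1$, and then a separate irreducibility argument to force $f_i=1$; for (ii) $\Rightarrow$ (iii) it invokes the Chinese Remainder Theorem to get $\mathcal O_K/p\mathcal O_K\cong\mathbb F_p^n$. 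Your approach is tidier in that it simply cashes in on Proposition \ref{null ideals}, which was just proved, rather than reproving a special case of it; the paper's approach has the virtue of being completely elementary and independent of that proposition. Either way the argument is short and sound.
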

\begin{proof}
The proof of the equivalence i)$\Leftrightarrow$ii) follows immediately from (\ref{int}) and Lemma \ref{IntQOKPef1}. Indeed, if $p$ is totally split in $\mathcal O_K$ then, for each prime ideal $\p$ of $\mathcal O_K$ above $p$, we have $\Int(\Z_{(p)})=\Int_{\Q}(\mathcal O_{K,(\p)})$, so that by (\ref{int}) we have the equality $\Int(\Z_{(p)})=\Int_{\Q}(\mathcal O_{K,(p)})$. Conversely, if the last equality holds, then by (\ref{int}), for each prime ideal $\p$ of $\mathcal O_K$ above $p$, we have $\Int(\Z_{(p)})\subseteq\Int_{\Q}(\mathcal O_{K,(\p)})\subseteq\Int(\Z_{(p)})$, so equality holds throughout and $p$ is totally split in $\mathcal O_K$.


We show now that ii)$\Rightarrow$iii). Suppose that $p$ is totally split in $\mathcal O_K$, so that, by the Chinese Remainder Theorem we have
$$\mathcal O_K/p\mathcal O_K\cong\mathbb{F}_p^n$$
where $n=[K:\Q]$. Hence, $X^p-X$ is zero on $\mathcal O_K/p\mathcal O_K$, so that $f(X)=\frac{X^p-X}{p}$ is in $\Int_{\Q}(\mathcal O_K)$. Conversely, suppose that $f(X)$ is in $\Int_{\Q}(\mathcal O_K)$. Then $X^p-X$ is zero on $\mathcal O_K/p\mathcal O_K\cong\prod_{i=1}^g \mathcal O_K/\p_i^{e_i}$, where $\p_1,\ldots,\p_g$ are the prime ideals of $O_K$ above $p$, with ramification index $e_i=e(\p_i|p)$ and residue class degree $f_i=f(\p_i|p)$. Consequently, $X^p-X$ is zero on each factor ring $\mathcal O_K/\p_i^{e_i}$, for $i=1,\ldots,g$. Let $\overline{\alpha}$ be in the Jacobson ideal of $\mathcal O_K/\p_i^{e_i}$, that is, $\overline{\alpha}$ is in $\p_i/\p_i^{e_i}$ (the unique maximal ideal of $\mathcal O_K/\p_i^{e_i}$). Then $1-\overline{\alpha}^{p-1}$ is a unit in $\mathcal O_K/\p_i^{e_i}$. But by assumption $\overline{\alpha}^p-\overline{\alpha}=\overline{\alpha}(\overline{\alpha}^{p-1}-1)=0$, so that $\overline{\alpha}=0$. Therefore, $\mathcal O_K/\p_i^{e_i}$ has trivial Jacobson ideal, which happens precisely when $e_i=1$. If $f_i>1$, then $\mathcal O_K/\p_i$ is a proper finite field extension of $\mathbb{F}_p$, so if we take an element $\overline{\gamma}$ of $\mathcal O_K/\p_i\setminus\mathbb{F}_p$, $\overline{\gamma}$ will be a zero of a monic irreducible polynomial $q(X)$ over $\mathbb{F}_p$ of degree strictly larger than $1$. Since $X^p-X$ is zero on $\overline{\gamma}$, we would have that $q(X)$ divides $X^p-X$ over $\mathbb{F}_p$, which is clearly not possible because $X^p-X$ splits over  $\mathbb{F}_p$. This shows that iii)$\Rightarrow$ii).\qed
\end{proof}

The next result characterizes the finite Galois extensions of $\Q$ in terms of the rings $\Int_{\Q}(\mathcal O_K)$. In particular, we can  recover $\mathcal O_K$ from $\Int_{\Q}(\mathcal O_K)$, if $K/\Q$ is Galois. Given a subring $R$ of $\Q[X]$, for each $\alpha\in\overline{\Z}$ we consider the following subset of $\Q(\alpha)$:
$$R(\alpha)=\{f(\alpha) \mid f\in R\}$$

\begin{theorem}\label{coro2.9}\label{equalIntQOK}
Let $K/\Q$ be a finite extension and let $R_K=\Int_{\Q}(\mathcal O_K)$. Then
\[K/\Q \textnormal{ is a Galois extension } \Leftrightarrow\{\alpha\in\overline{\Z} \mid R_K(\alpha)\subset\overline{\Z}\}=\mathcal O_K.\]
In particular, if $K$ and $K'$ are two Galois extensions of $\Q$ such that 
$\Int_{\Q}(\mathcal O_K)=\Int_{\Q}(\mathcal O_{K'})$, then $K=K'$.
\end{theorem}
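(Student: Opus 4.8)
The plan is to set $S_K:=\{\alpha\in\overline{\Z}\mid R_K(\alpha)\subseteq\overline{\Z}\}$, so that the statement to be proved reads: $K/\Q$ is Galois if and only if $S_K=\mathcal O_K$. First I would record that $\mathcal O_K\subseteq S_K$ for \emph{every} finite extension $K/\Q$, since for $\alpha\in\mathcal O_K$ and $f\in R_K=\Int_\Q(\mathcal O_K)$ one has $f(\alpha)\in\mathcal O_K\subseteq\overline{\Z}$. Thus only the reverse inclusion is at stake, and I would aim to show it holds precisely when $K/\Q$ is Galois. Granting this, the final assertion is immediate: if $K,K'$ are Galois over $\Q$ with $\Int_\Q(\mathcal O_K)=\Int_\Q(\mathcal O_{K'})$, then $R_K=R_{K'}$, the associated sets $S_K,S_{K'}$ coincide, and the equivalence yields $\mathcal O_K=S_K=S_{K'}=\mathcal O_{K'}$, hence $K=\mathrm{Frac}(\mathcal O_K)=\mathrm{Frac}(\mathcal O_{K'})=K'$.

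For the forward direction I would argue as follows. Assume $K/\Q$ Galois and take $\alpha\in S_K$; let $M$ be the Galois closure of $K(\alpha)$ over $\Q$, and put $G=\Gal(M/\Q)$ and $H=\Gal(M/K)$, which is normal in $G$ because $K/\Q$ is Galois. It is enough to show $\tau(\alpha)=\alpha$ for every $\tau\in H$: then $\alpha\in M^H=K$, hence $\alpha\in\overline{\Z}\cap K=\mathcal O_K$. Fix $\tau\in H$ and suppose $\delta:=\tau(\alpha)-\alpha\neq 0$. By the Chebotarev density theorem for $M/\Q$ there are infinitely many primes $\mathfrak{P}$ of $M$, unramified over $\Q$ and not dividing $\delta$, with $\mathrm{Frob}_{\mathfrak{P}}=\tau$; fix one such $\mathfrak{P}$, lying over the rational prime $p$. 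Since $\tau$ restricts to the identity on $K$ and the Frobenius elements of the primes of $K$ above $p$ form a single conjugacy class in $\Gal(K/\Q)$, that class is trivial, and $p$ is unramified in $K$, so $p$ splits completely in $\mathcal O_K$. By Corollary \ref{ptotallysplit}, $(X^p-X)/p\in R_K$, hence $(\alpha^p-\alpha)/p\in\overline{\Z}$, which at $\mathfrak{P}$ forces $\alpha^p\equiv\alpha\pmod{\mathfrak{P}}$; but also $\alpha^p\equiv\mathrm{Frob}_{\mathfrak{P}}(\alpha)=\tau(\alpha)\pmod{\mathfrak{P}}$ since $\mathfrak{P}$ is unramified. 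Therefore $\mathfrak{P}\mid\delta$, contradicting the choice of $\mathfrak{P}$, so $\tau(\alpha)=\alpha$. The only external ingredient here is Corollary \ref{ptotallysplit}; the step I expect to be the crux is the idea of passing to the Galois closure of $K(\alpha)$ and combining Chebotarev with the divisibility argument, after which the valuation bookkeeping (deducing $\alpha^p\equiv\alpha\pmod{\mathfrak{P}}$ from $(\alpha^p-\alpha)/p\in\overline{\Z}$, and matching it with the Frobenius congruence) is routine.

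For the converse I would argue by contraposition. If $K/\Q$ is not Galois, choose a $\Q$-embedding $\sigma\colon K\hookrightarrow\overline{\Q}$ with $K':=\sigma(K)\neq K$. For $\alpha\in\mathcal O_K$ and $f\in R_K\subseteq\Q[X]$ the coefficients of $f$ are fixed by $\sigma$, so $f(\sigma(\alpha))=\sigma(f(\alpha))\in\sigma(\mathcal O_K)=\mathcal O_{K'}\subseteq\overline{\Z}$; hence $\mathcal O_{K'}=\sigma(\mathcal O_K)\subseteq S_K$. If $S_K=\mathcal O_K$ held, this would give $\mathcal O_{K'}\subseteq\mathcal O_K$, whence $K'=\mathrm{Frac}(\mathcal O_{K'})\subseteq K$, and comparing degrees over $\Q$ would force $K'=K$, a contradiction. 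Therefore $S_K\supsetneq\mathcal O_K$, and together with the forward direction this establishes the equivalence, and hence the theorem.
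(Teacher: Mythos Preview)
Your proof is correct and shares the same two pillars as the paper's argument: the Galois-invariance of $S_K$ for the implication $(\Leftarrow)$, and the combination of Chebotarev with Corollary~\ref{ptotallysplit} (the polynomial $(X^p-X)/p$) for the implication $(\Rightarrow)$. The converse direction is essentially identical to the paper's, just phrased contrapositively.

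In the forward direction your organization differs in a useful way. The paper takes $\alpha\notin\mathcal O_K$, passes to the Galois closure $N_\alpha$ of $\Q(\alpha)$, invokes the corollary of Chebotarev that split primes determine a Galois extension to find $p$ totally split in $K$ but not in $N_\alpha$, and then needs the Dedekind--Kummer theorem (together with the auxiliary condition $p\nmid[\mathcal O_{\Q(\alpha)}:\Z[\alpha]]$) to locate a prime $\mathfrak p$ of $\Q(\alpha)$ at which $\alpha\bmod\mathfrak p\notin\mathbb F_p$, so that $(\alpha^p-\alpha)/p$ is non-integral. You instead work in the Galois closure $M$ of $K(\alpha)$, fix $\tau\in\Gal(M/K)$, and use Chebotarev directly to realize $\tau$ as a Frobenius at an unramified prime $\mathfrak P\nmid(\tau(\alpha)-\alpha)$; the two congruences $\alpha^p\equiv\alpha$ and $\alpha^p\equiv\tau(\alpha)\pmod{\mathfrak P}$ then collide. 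This bypasses the Dedekind--Kummer step and the index condition entirely, at the modest cost of working one element $\tau$ at a time. Both arguments are short; yours is slightly more self-contained.
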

Note that the condition $R_K(\alpha)\subset\overline{\Z}$ is equivalent to $R_K(\alpha)\subseteq \mathcal O_{\Q(\alpha)}$.
\begin{proof}
The second statement about $K$ and $K'$ follows immediately from the first. 

For the first statement, let $R_K=\Int_{\Q}(\mathcal O_K)$ and 
suppose that $\{\alpha\in\overline{\Z} \mid R_K(\alpha)\subset\overline{\Z}\}=\mathcal O_K$. It is easily seen that the left-hand side is  invariant under the action of the absolute Galois group ${\rm Gal}(\overline{\Q}/\Q)$. Hence, $\mathcal O_K$ contains the ring of integers of all the conjugates of $K$ over $\Q$, so $K/\Q$ is Galois. 

Conversely, suppose that $K/\Q$ is a Galois extension. It is clear that we have the containment $\{\alpha\in\overline{\Z} \mid R_K(\alpha)\subset\overline{\Z}\}\supseteq\mathcal O_K$. Conversely, let $\alpha\in\overline{\Z}$, $\alpha\notin \mathcal O_K$. We have to show that there exists $f\in \Int_{\Q}(\mathcal O_K)$ such that $f(\alpha)\notin\overline{\Z}$. Let $K_{\alpha}=\Q(\alpha)$ and let $N_{\alpha}$ be the Galois closure of $K_{\alpha}$ over $\Q$ (the compositum inside $\overline{\Q}$ of all the conjugates over $\Q$ of $K_{\alpha}$). We have that $\alpha\notin K\Leftrightarrow K_{\alpha}\not\subset K\Leftrightarrow N_{\alpha}\not\subset K$, where the last equivalence holds because by assumption $K/\Q$ is Galois.

By Tchebotarev's Density Theorem, a Galois extension $K$ of $\Q$ is completely determined by the set of primes $S(K/\Q)$ which are totally split in $K$ (see \cite[Chapter VII, Corollary 13.10]{Neu}). Hence, the condition $N_{\alpha}\not\subset K$ is equivalent to $S(K/\Q)\not\subset S(N_{\alpha}/\Q)$, that is, the set of primes $p\in\Z$ which are totally split in $K$ is not contained in the set of primes which are totally split in $N_{\alpha}$. Let $p\in\Z$ be such a prime and suppose also that 
\begin{itemize}
\item[-] $p$ is ramified neither in $K$ nor in $N_{\alpha}$.
\item[-] $p$ does not divide $[\mathcal O_{K_{\alpha}}:\Z[\alpha]]$
\end{itemize}
The above primes are always finite in number and since the above set is infinite, by removing the latter primes we still get a non-empty set. By Corollary \ref{ptotallysplit}, $f(X)=\frac{X^p-X}{p}$ is in $\Int_{\Q}(\mathcal O_K)$ but not in $\Int_{\Q}(\mathcal O_{N_{\alpha}})$. Recall that a prime $p\in\Z$ splits completely in the normal closure $N_{\alpha}$ of $K_{\alpha}$ (over $\Q$) if and only if it splits completely in $K_{\alpha}$ (\cite[Chapt. 4, Corollary of Theorem 31]{Marcus}). Hence, there exists some prime ideal $\p$ of  $\mathcal O_{K_{\alpha}}$ above $p$ which has inertia degree strictly greater than $1$. Since $p$ does not divide $[\mathcal O_{K_{\alpha}}:\Z[\alpha]]$, it follows by Dedekind-Kummer's Theorem (see \cite[Chapter I, Proposition 8.3]{Neu}) that the factorization in $\mathbb{F}_p[X]$ of the residue modulo $p$ of the minimal polynomial $p_{\alpha}(X)$ of $\alpha$ over $\Z$ has at least one irreducible polynomial over $\mathbb{F}_p$ whose degree is strictly greater than $1$; this factor corresponds to a prime ideal $\p$ of $\mathcal O_{K_{\alpha}}$ above $p$ which is not inert, that is $\mathcal O_{K_{\alpha}}/\p\supsetneq\mathbb{F}_p$. In particular, this means that modulo $\p$, $\alpha$ is not in $\mathbb{F}_p$, and so it is not annihilated by $\overline{g}(X)=X^p-X$ (equivalently, modulo $\p$, $\alpha$ is a zero of an irreducible polynomial over $\mathbb{F}_p$ of degree strictly greater than $1$). This implies that $f(\alpha)$ is not integral over $\Z$.\qed
\end{proof}

\begin{remark}\label{alternative proof}
We also offer a shorter proof of the second statement in Theorem \ref{equalIntQOK} based on the followin claim: let $K,K'$ be two finite Galois extensions over $\Q$. Then
$$\mathcal{O}_K\subseteq\mathcal{O}_{K'}\Leftrightarrow \Int_{\Q}(\mathcal{O}_{K'})\subseteq\Int_{\Q}(\mathcal{O}_{K})$$
We prove the implication $(\Leftarrow)$, the other being obvious (and it is true even without the Galois assumption). Suppose then that $\Int_{\Q}(\mathcal{O}_{K'})\subseteq\Int_{\Q}(\mathcal{O}_{K})$ and let $p\in\Z$ be a prime which is totally split in $\mathcal O_{K'}$. Then by Corollary 2.6 we have
$$\Int(\Z_{(p)})=\Int_{\Q}(O_{K',(p)})\subseteq\Int_{\Q}(O_{K,(p)}).$$
Since in any case $\Int_{\Q}(O_{K,(p)})$ is contained in $\Int(\Z_{(p)})$, the containment  in the above equation is an equality, so that again by Corollary 2.6 we have that $p$ is totally split in $K$, too. This shows that the set of primes $p\in\Z$ which are totally split in $\mathcal O_{K'}$ is contained in the set of primes which are totally split in $\mathcal O_{K}$. Therefore, by \cite[Chapt. VII, Prop. 13.9]{Neu} it follows that $K\subseteq K'\Leftrightarrow \mathcal{O}_K\subseteq \mathcal{O}_{K'}$.

Note that the above statement shows in particular that the functor $\Int_{\Q}:\mathcal G\to \mathcal C$ is full, as we claimed in the Introduction.
%
%
\end{remark}

\section{Characteristic ideals} \label{sec3}

Proposition \ref{prop2.1} reduces the study of characteristic ideals of $\Int_\Q(\mathcal O_K)$ to the study of 
characteristic ideals in the local case. We will address a description of these ideals and apply the local results to 
the global context. 

\subsection{Local case} 
Fix a finite field extension $K/\Q_p$ having residue class degree $f$ and ramification degree $e$. 
Denote by $v_p$ the $p$-adic valuation of $\Q_p$, normalized such that 
$v_p(p)=1$. Let:
\[w_{p}(n):=v_p(n!)=\sum_{j\geq 1} \left\lfloor\frac{n}{p^j}\right\rfloor\]
and, if $q=p^f$ is the cardinality of the residue field of $K$, put 
\[w_{q}(n):=\sum_{j\geq 1} \left\lfloor\frac{n}{q^j}\right\rfloor.\]
The following equality follows from \cite[Corollary II.2.9]{CaCh}: 
\[-v_p\left(\mathfrak I_n\left(\Int(\Z_p)\right)\right)=w_p(n)\]
and, similarly, we have:
\begin{equation}\label{reg-basis}
-v_{\pi}\left(\mathfrak I_n\left(\Int(\mathcal O_K)\right)\right)={w_q(n)}
\end{equation}
where $\pi$ is a uniformizer of $K$ and $v_{\pi}$ the associated valuation. 

We define finally 
\[w_{\mathcal O_K}^{\Q_p}(n):=-v_p\left(\mathfrak I_n\left(\Int_{\Q_p}(\mathcal O_K)\right)\right).\]
The following equality holds because of the next lemma, noticing that $\left\lceil-\frac{n}{e}\right\rceil=-\left\lfloor\frac{n}{e}\right\rfloor$:
$$\mathfrak I_n(\Int(\mathcal O_K))\cap \Q_p=p^{-\lfloor \frac{w_q(n)}{e}\rfloor}\Z_p$$
and since $\mathfrak I_n(\Int_{\Q_p}(\mathcal O_K))\subseteq\mathfrak I_n(\Int(\mathcal O_K))\cap \Q_p$, for every $n\in\N$ we have:
\begin{equation}\label{inequality}
w_{\mathcal O_K}^{\Q_p}(n)\leq \left\lfloor \frac{w_q(n)}{e}\right\rfloor .
\end{equation}

\begin{lemma}
Let $n\in\Z$ and $e=e(\p|p)$, where $\p$ is the maximal ideal of $\mathcal O_K$. Then 
\[\p^n\cap\Q_p=p^{\lceil\frac{n}{e}\rceil}\Z_p.\]
\end{lemma}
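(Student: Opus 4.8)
The plan is to convert the ideal-theoretic identity into a comparison of valuations. Let $v_{\p}$ denote the normalized valuation on $K$ attached to $\p$, so that $v_{\p}$ is integer-valued on $K^{\times}$ and $v_{\p}(\pi)=1$ for a uniformizer $\pi$ of $\mathcal O_K$; by the very definition of the ramification index we have $p\mathcal O_K=\p^{e}$, i.e. $v_{\p}(p)=e$. Since $v_p$ is normalized by $v_p(p)=1$ and $\Q_p^{\times}$ is generated up to units by $p$, this yields $v_{\p}|_{\Q_p^{\times}}=e\cdot v_p$. For any $n\in\Z$ the fractional ideal $\p^{n}$ of $\mathcal O_K$ is precisely $\{x\in K\mid v_{\p}(x)\geq n\}$; intersecting with $\Q_p$ and using the previous relation, an element $x\in\Q_p$ lies in $\p^{n}\cap\Q_p$ if and only if $e\cdot v_p(x)\geq n$, that is, $v_p(x)\geq n/e$.

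Now I would finish by invoking the discreteness of $v_p$ on $\Q_p$: because $v_p(x)\in\Z$ for every $x\in\Q_p^{\times}$ (and $x=0$ satisfies both conditions vacuously), the inequality $v_p(x)\geq n/e$ is equivalent to $v_p(x)\geq\lceil n/e\rceil$. Since for $m\in\Z$ one has $v_p(x)\geq m$ exactly when $x\in p^{m}\Z_p$, this gives $\p^{n}\cap\Q_p=p^{\lceil n/e\rceil}\Z_p$, which is the claim.

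There is essentially no substantive obstacle here; the argument is a direct valuation computation. The only point meriting a moment's care is the passage from $v_p(x)\geq n/e$ to $v_p(x)\geq\lceil n/e\rceil$, which relies on the integrality of the valuation $v_p$ on $\Q_p$, and keeping track that the statement is asserted for all $n\in\Z$ (so $\p^{n}$ is read as a fractional ideal when $n<0$, and $\lceil n/e\rceil$ may then be nonpositive), which causes no difficulty.
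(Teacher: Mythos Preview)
Your proof is correct and follows essentially the same approach as the paper: both arguments rest on the relation $v_{\p}|_{\Q_p}=e\cdot v_p$ and then use the integrality of $v_p$ to pass from $v_p(x)\geq n/e$ to $v_p(x)\geq\lceil n/e\rceil$. The only cosmetic difference is that the paper splits the argument into the two inclusions $(\supseteq)$ and $(\subseteq)$, whereas you phrase it as a single chain of equivalences.
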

\begin{proof}
$(\supseteq)$. Clearly, $p^{\lceil\frac{n}{e}\rceil}\in \p^n\Leftrightarrow v_\p(p^{\lceil\frac{n}{e}\rceil})=e\cdot\lceil\frac{n}{e}\rceil\geq n$, which is true, so the containment follows, since clearly $\p^n\cap\Q_p$ is a $\Z_p$-module.

$(\subseteq)$. Let $\alpha\in \p^n\cap\Q_p$, say $\alpha=p^m u$, where $u\in \Z_p^*$ and $m=v_p(\alpha)$. Then $v_\p(\alpha)=me$ which has to be greater than or equal to $n$. Therefore, $m\geq\lceil\frac{n}{e}\rceil$, so $\alpha\in p^{\lceil\frac{n}{e}\rceil}\Z_p$.\qed
\end{proof}

The main result of this section shows the opposite inequality in (\ref{inequality}) in the case of tame ramification for a finite Galois extension. By the above remarks, this corresponds to say that $\mathfrak I_n(\Int_{\Q_p}(\mathcal O_K))=\mathfrak I_n(\Int(\mathcal O_K))\cap \Q_p$, for each $n\in\N$. We  show in Examples
\ref{remark3.5} that these two conditions, namely, Galois and tame ramification, cannot be relaxed. 

\begin{theorem}\label{equality-theorem}
Let $K/\Q_p$ be a finite tamely ramified Galois extension, with ramification index $e$ and residue field of cardinality $q$.
Then for all $n\in\N$ we have 
$$w_{\mathcal O_K}^{\Q_p}(n)=\left\lfloor \frac{w_q(n)}{e}\right\rfloor.$$
In particular, $w_{\mathcal O_K}^{\Q_p}(n)$ only depends on $n$, $q$ and $e$. 
\end{theorem}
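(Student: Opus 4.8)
The inequality ``$\le$'' is precisely (\ref{inequality}), so the plan is to establish the reverse inequality $w_{\mathcal O_K}^{\Q_p}(n)\ge\lfloor w_q(n)/e\rfloor$ for every $n$. Since $\mathfrak I_n(\Int_{\Q_p}(\mathcal O_K))$ is generated over $\Z_p$ by the leading coefficients of the degree-$n$ elements of $\Int_{\Q_p}(\mathcal O_K)$, it is enough to exhibit, for each $n$, a monic $h_n(X)\in\Z_p[X]$ of degree $n$ with $v_\pi(h_n(\alpha))\ge w_q(n)$ for all $\alpha\in\mathcal O_K$. Granting this, set $m:=\lfloor w_q(n)/e\rfloor$ and $f_n:=h_n/p^{m}$; then $v_\pi(f_n(\alpha))\ge w_q(n)-em\ge 0$ for all $\alpha\in\mathcal O_K$, so $f_n\in\Int(\mathcal O_K)\cap\Q_p[X]=\Int_{\Q_p}(\mathcal O_K)$ has degree $n$ and leading coefficient $p^{-m}$, which gives $w_{\mathcal O_K}^{\Q_p}(n)\ge m$, and hence the asserted equality together with (\ref{inequality}). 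The final claim of the theorem is then immediate from the closed formula.

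The construction of $h_n$ would use the $q$-adic digit expansion of $\mathcal O_K$ and the Galois action. Let $L$ be the maximal unramified subextension of $K/\Q_p$; by tameness $K=L(\sqrt[e]{\varpi})$ for a uniformizer $\varpi$ of $L$, and since $1+p\mathcal O_L$ is uniquely $e$-divisible one may choose a uniformizer $\pi$ of $K$ with $\pi^{e}=p\zeta$ for some $\zeta\in\mu_{q-1}$; tameness also gives $\mu_e\subseteq L$, so $e\mid q-1$. Take $C:=\{0\}\cup\mu_{q-1}\subseteq\mathcal O_K$, a set of Teichmüller representatives of $\mathcal O_K/\p\cong\mathbb F_q$. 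Each $\alpha\in\mathcal O_K$ is uniquely $\sum_{i\ge 0}\beta_i\pi^{i}$ with $\beta_i\in C$, so for each $k\ge 1$ the set $R_k:=\{\sum_{i=0}^{k-1}\beta_i\pi^{i}:\beta_i\in C\}$ (and $R_0:=\{0\}$) is a complete set of residues of $\mathcal O_K/\p^{k}$. Put $P_{q^k}(X):=\prod_{b\in R_k}(X-b)$, which is monic of degree $q^{k}$. I would then check two things. First, $v_\pi(P_{q^k}(\alpha))\ge w_q(q^{k})=1+q+\cdots+q^{k-1}$ for all $\alpha\in\mathcal O_K$: this is the usual Pólya-type count, because $R_k$ being a complete residue system modulo $\p^{k}$, exactly $q^{k-j}$ of its elements lie in the class of $\alpha$ modulo $\p^{j}$ for each $1\le j\le k$. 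Second, $P_{q^k}(X)\in\Z_p[X]$: here I would use that, as $K/\Q_p$ is Galois and tame, every $\sigma\in\Gal(K/\Q_p)$ satisfies $\sigma(C)=C$ and $\sigma(\pi)=\zeta_\sigma\pi$ with $\zeta_\sigma\in\mu_{q-1}$ — indeed $(\sigma(\pi)/\pi)^{e}=\sigma(\zeta)/\zeta\in\mu_{q-1}$, and since the torsion of $1+\p$ is pro-$p$ while $e$ is prime to $p$, the unit $\sigma(\pi)/\pi$ is itself a Teichmüller element. Consequently $\sigma$ maps $\sum_i\beta_i\pi^{i}$ to $\sum_i(\sigma(\beta_i)\zeta_\sigma^{\,i})\pi^{i}$, which permutes $R_k$, so the coefficients of $P_{q^k}$ lie in $\mathcal O_K\cap\Q_p=\Z_p$.

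For a general $n$, write the base-$q$ expansion $n=\sum_{k\ge 0}n_kq^{k}$ with $0\le n_k\le q-1$ and put $h_n(X):=\prod_{k\ge 0}P_{q^k}(X)^{n_k}$. This is monic in $\Z_p[X]$ of degree $\sum_k n_kq^{k}=n$, and $v_\pi(h_n(\alpha))\ge\sum_k n_k w_q(q^{k})=w_q(n)$ for all $\alpha\in\mathcal O_K$, using the elementary identity $\sum_k n_k(1+q+\cdots+q^{k-1})=\sum_{j\ge 1}\lfloor n/q^{j}\rfloor$. These are the polynomials demanded in the first paragraph, which would complete the argument.

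The step I expect to be the real obstacle is the second of the two checks above: choosing, compatibly for all $k$, complete residue systems modulo $\p^{k}$ that are stable under $\Gal(K/\Q_p)$. This is exactly where both hypotheses are needed. The Galois assumption makes the action of $\Gal(\overline{\Q_p}/\Q_p)$ on these systems factor through the finite group $\Gal(K/\Q_p)$, while tameness is what forces $\sigma(\pi)/\pi$ to be a root of unity of order prime to $p$ (in the wildly ramified case $1+\p$ has $p$-power torsion and the argument breaks down). This matches the fact recorded in Examples \ref{remark3.5} that neither hypothesis can be dropped.
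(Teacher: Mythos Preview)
Your argument is correct, and it takes a genuinely different route from the paper's proof. The paper does not construct explicit polynomials at all: instead it observes that for $f\in\Int(\mathcal O_K)$ and $\sigma\in\Gal(K/\Q_p)$ one has $f^\sigma\in\Int(\mathcal O_K)$, so the trace $\mathrm{Tr}(f)=\sum_\sigma f^\sigma$ lies in $\Int_{\Q_p}(\mathcal O_K)$; then, since tameness is equivalent to surjectivity of $\tr_{K/\Q_p}:\mathcal O_K\to\Z_p$, one picks $\alpha\in\mathcal O_K$ with $\tr(\alpha)=1$, takes any $f\in\Int(\mathcal O_K)$ of degree $n$ with leading coefficient $p^{-\lfloor w_q(n)/e\rfloor}\alpha$ (possible by \eqref{reg-basis}), and $\mathrm{Tr}(f)$ has the desired leading coefficient $p^{-\lfloor w_q(n)/e\rfloor}$. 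Your approach instead produces explicit monic $h_n\in\Z_p[X]$ by building $\Gal(K/\Q_p)$-stable residue systems $R_k$; the key technical point, which you handle correctly, is the choice of a uniformizer $\pi$ with $\pi^e=p\zeta$, $\zeta\in\mu_{q-1}$, so that $\sigma(\pi)/\pi$ is forced into $\mu_{q-1}$ (using that $1+\p$ is pro-$p$ and $p\nmid e$). The paper's proof is shorter and isolates the role of tameness cleanly as surjectivity of the trace; your proof is longer but constructive, and in effect extends Remark~\ref{unramified explicit bases} (the explicit basis in the unramified case) to the full tamely ramified Galois setting, since your $h_n/p^{\lfloor w_q(n)/e\rfloor}$ furnish an explicit regular basis of $\Int_{\Q_p}(\mathcal O_K)$.
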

\begin{proof} 
By (\ref{inequality}) it is sufficient to show that $d=p^{-\lfloor \frac{w_q(n)}{e}\rfloor}$ is in $\mathfrak{I}_n(\Int_{\Q_p}(\mathcal O_K))$. 

We observe that if $f(X)=\sum_{i=0}^na_iX^i$ belongs to $\Int(\mathcal O_K)$, then 
$f^\sigma(X):=\sum_{i=0}^n\sigma (a_i)X^i$ belongs to $\Int(\mathcal O_K)$ 
for all $\sigma\in G=\Gal(K/\Q_p)$ (here we use crucially the assumption that $K/\Q_p$ is Galois). 
As a consequence, if we denote $\tr=\tr_{K/\Q_p}: K\rightarrow\Q_p$ the trace homomorphism, we see that  
\[\mathrm{Tr}(f):=\sum_{\sigma\in G} f^{\sigma}=\sum_{i=0}^n\tr(a_i)X^i\] 
belongs to $\Int_{\Q_p}(\mathcal O_K)$, if $f\in\Int(\mathcal O_K)$. Therefore, the trace homomorphisms between the function fields $\mathrm{Tr}:K(X)\to \Q_p(X)$ restricts to $\mathrm{Tr}:\Int(\mathcal O_K)\to\Int_{\Q_p}(\mathcal O_K)$. 

Since $p\nmid e$, the trace homomorphism $\tr$ is surjective (the converse is also true, see \cite[Chapter 5, Corollary, p. 227]{Nar}). Fix $\alpha\in \mathcal{O}_K$ such that $\tr(\alpha)=1$ and let $c=d\alpha\in K$. In particular, since the trace is a $\Q_p$-homomorphism, we have $\tr(c)=d$. Note that the $v_{\pi}$-value of $c$ is greater than or equal to $-e\lfloor \frac{w_q(n)}{e}\rfloor\geq -w_q(n)$. By (\ref{reg-basis}), $c$ is in $\mathfrak{I}_n(\Int(\mathcal{O}_K))$, so there exists $f\in \Int(\mathcal{O}_K)$ of degree $n$ whose leading coefficient is equal to $c$. 
Therefore, $\mathrm{Tr}(f)$ is a polynomial of degree $n$ in $\Int_{\Q_p}(\mathcal{O}_K)$ with leading coefficient equal to $d$, as we wanted to show.\qed
\end{proof}

\begin{remark} We remark that, from the fact that $\tr=\tr_{K/\Q_p}:\mathcal O_K\to\Z_p$ is surjective (because the extension is tame), the proof of Theorem \ref{equality-theorem} also shows that the restriction of the trace homomorphism $\mathrm{Tr}:\Int(\mathcal O_K)\to\Int_{\Q_p}(\mathcal O_K)$ is surjective. In fact, for each $n\in\N$, the $n$-th element of a regular basis of $\Int_{\Q_p}(\mathcal O_K)$, whose leading coefficient has $p$-adic value $-\left\lfloor \frac{w_q(n)}{e}\right\rfloor$ by the above theorem, is the image via the trace homomorphism of a polynomial of $\Int(\mathcal O_K)$. 

Obviously, if $\mathrm{Tr}$ is surjective, it is easily seen that $\tr$ is surjective, because  $\Z_p$ is contained in $\Int_{\Q_p}(\mathcal O_K)$. Finally, we have the following commutative diagram:
\[
\xymatrix{
\Int(\mathcal O_K)\ar[r]^{\mathrm{Tr}}&\Int_{\Q_p}(\mathcal O_K)\\
\mathcal{O}_K\ar@{^{(}->}[u]\ar[r]^{\tr}&\Z_p\ar@{^{(}->}[u]
}
\]

\end{remark}
The next corollary shows that  Theorem \ref{equalIntQOK} is false in the local case.
\begin{corollary} 
Let $K_1,K_2$ be two finite tamely ramified Galois extensions of $\Q_p$. Then $\Int_{\Q_p}(\mathcal{O}_{K_1})=\Int_{\Q_p}(\mathcal{O}_{K_2})$ if and only if $K_1$ and $K_2$ have the same ramification index and residue field degree.
\end{corollary}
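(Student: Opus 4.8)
The plan is to prove the two implications separately; the forward one is elementary and the converse carries the weight.

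\emph{Necessity.} Suppose $R:=\Int_{\Q_p}(\mathcal O_{K_1})=\Int_{\Q_p}(\mathcal O_{K_2})$ and set $P_i:=(X^{q_i}-X)^{e_i}\in\Z[X]$. Since $K_i/\Q_p$ is local with ramification index $e_i$ we have $p\mathcal O_{K_i}=\p_i^{e_i}$, so Proposition \ref{null ideals} gives $P_i(\mathcal O_{K_i})\subseteq p\mathcal O_{K_i}$; hence $P_i/p\in R$, and $P_i/p\notin\Z_p[X]$ since $P_i$ is primitive modulo $p$. Conversely, every $f\in R\setminus\Z_p[X]$ can be written $f=g/p^m$ with $g\in\Z_p[X]$ primitive and $m\geq1$, and then $g(\mathcal O_{K_i})\subseteq p^m\mathcal O_{K_i}\subseteq\p_i^{e_i}$, so the reduction $\overline g$ lies in $N_{\mathbb{F}_p}(\mathcal O_{K_i}/\p_i^{e_i})=(\overline{P_i})$ by Proposition \ref{null ideals}; as $\overline g\neq0$ this forces $\deg f=\deg g\geq\deg\overline{P_i}=e_iq_i$. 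Combining the two observations, $e_1q_1=\min\{\deg f:f\in R\setminus\Z_p[X]\}=e_2q_2=:d$. The same argument applied to the $\mathbb{F}_p$-vector space $V:=\{\overline g:g\in\Z_p[X],\ \deg g\leq d,\ g/p\in R\}$ shows $V=\mathbb{F}_p\cdot\overline{P_1}=\mathbb{F}_p\cdot\overline{P_2}$, so $\overline{P_1}=\overline{P_2}$ in $\mathbb{F}_p[X]$ after comparing leading coefficients. Since $\overline{P_i}=\prod_{\gamma\in\mathbb{F}_{q_i}}(X-\gamma)^{e_i}$ has exactly $q_i$ distinct roots in $\overline{\mathbb{F}_p}$, each of multiplicity $e_i$, we conclude $q_1=q_2$ (hence $f_1=f_2$) and $e_1=e_2$.

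\emph{Sufficiency.} Put $e:=e_1=e_2$, $q:=q_1=q_2$. Since $\Int_{\Q_p}(\mathcal O_K)=\Int(\mathcal O_K)\cap\Q_p[X]$, a polynomial $g/p^m$ ($g\in\Z_p[X]$, $m\geq0$) lies in it if and only if $g(\mathcal O_K)\subseteq p^m\mathcal O_K$, i.e. if and only if the image of $g$ in $(\Z/p^m\Z)[X]$ belongs to $N_{\Z/p^m\Z}(\mathcal O_K/p^m\mathcal O_K)$. Thus it suffices to establish the following prime power refinement of Proposition \ref{null ideals}: for $K/\Q_p$ tamely ramified and Galois, the ideal $N_{\Z/p^m\Z}(\mathcal O_K/p^m\mathcal O_K)$ depends only on $e$, $q$, $m$. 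To this end I would use the structure of such $K$: writing $K_0$ for the maximal unramified subextension, one has (and here tameness together with the Galois hypothesis forces $e\mid q-1$) $\mathcal O_K=\mathcal O_{K_0}[\pi]$ with $\pi^e=p\zeta$ for a root of unity $\zeta\in\mu_{q-1}\subseteq\mathcal O_{K_0}^\times$, so that $\mathcal O_K/p^m\mathcal O_K\cong(\mathcal O_{K_0}/p^m\mathcal O_{K_0})[\pi]/(\pi^e-p\zeta)$, the only remaining freedom being the class of $\zeta$ modulo $\mu_{q-1}^e$. Because $\pi^N$ reduces to $(p\zeta)^{\lfloor N/e\rfloor}\pi^{N\bmod e}$, the parameter $\zeta$ affects the $(\mathcal O_{K_0}/p^m)$-coordinates of $g(\alpha)$, for $\alpha\in\mathcal O_K$, only through the quantities $(p\zeta)^k$ with $1\leq k\leq m-1$, and a careful bookkeeping of this dependence shows that the condition ``$g(\alpha)\in p^m\mathcal O_K$ for all $\alpha$'' does not see $\zeta$; this gives the required independence, and then $\Int_{\Q_p}(\mathcal O_{K_1})=\Int_{\Q_p}(\mathcal O_{K_2})$ follows at once.

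\emph{Main obstacle.} The non-trivial step is exactly this independence of the null ideal from the choice of tame Galois extension with given invariants. It cannot be reduced to an isomorphism of the quotient rings: for $m\geq2$ the algebras $\mathcal O_{K_1}/p^m\mathcal O_{K_1}$ and $\mathcal O_{K_2}/p^m\mathcal O_{K_2}$ need not be isomorphic over $\Z/p^m\Z$ (already over $\Q_3$, with $K_1=\Q_3(\sqrt3)$ and $K_2=\Q_3(\sqrt{-3})$), and passing to a large unramified base ring over which they do become isomorphic does not help, because forming null ideals is not compatible with such base change. So the independence has to be extracted directly from the explicit presentation above -- this is the computation that carries the proof, everything else being routine. (If instead one has available a description of $\Int_{\Q_p}(\mathcal O_K)$ as the ring of polynomials in $\Q_p[X]$ that are integer-valued on the union of all $\mathcal O_L$ with $e(L)\mid e$ and $f(L)\mid f$, in the spirit of \cite{LopWer,PerIntvalbounded}, then the independence -- hence sufficiency -- is immediate.)
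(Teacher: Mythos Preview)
Your necessity argument is correct and is essentially a spelled-out local version of the paper's Proposition~\ref{comparison IntQrings}; the paper simply cites that proposition rather than rederiving the conclusion from Proposition~\ref{null ideals}.

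The sufficiency direction, however, has a genuine gap. You correctly reduce the question to showing that the null ideals $N_{\Z/p^m\Z}(\mathcal O_K/p^m\mathcal O_K)$ depend only on $(e,q,m)$, and you set up the explicit presentation $\mathcal O_K/p^m\cong(\mathcal O_{K_0}/p^m)[\pi]/(\pi^e-p\zeta)$. But the crucial step---that the vanishing condition ``$g(\alpha)\in p^m\mathcal O_K$ for all $\alpha$'' is insensitive to $\zeta$---is only asserted (``a careful bookkeeping of this dependence shows\ldots''), not carried out. You yourself flag this as the main obstacle, note that the quotient algebras need \emph{not} be isomorphic over $\Z/p^m\Z$, and observe that base change does not help. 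That is an honest identification of the difficulty, but it is not a proof; the ``bookkeeping'' you invoke is exactly where the content lies, and nothing in your outline explains why the $\zeta$-dependence in the coordinates of $g(\alpha)$ should cancel when one imposes the condition for \emph{all} $\alpha$.

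The paper bypasses this entirely. Instead of comparing null ideals modulo $p^m$, it proves (Theorem~\ref{equality-theorem}) that the characteristic ideals $\mathfrak I_n(\Int_{\Q_p}(\mathcal O_K))$ are given by the explicit formula $p^{-\lfloor w_q(n)/e\rfloor}\Z_p$, which visibly depends only on $(e,q,n)$. The key device is the trace: for tamely ramified Galois $K/\Q_p$ the trace $\tr_{K/\Q_p}:\mathcal O_K\to\Z_p$ is surjective, and applying it coefficientwise to a polynomial in $\Int(\mathcal O_K)$ with prescribed leading coefficient produces a polynomial in $\Int_{\Q_p}(\mathcal O_K)$ realizing the desired characteristic ideal. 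Once the characteristic ideals coincide, the two rings share a regular basis and hence are equal. This argument is short, uniform in $m$, and makes no reference to the fine structure of $\mathcal O_K/p^m\mathcal O_K$; it is worth comparing with your approach to see how the trace sidesteps precisely the obstacle you identified.
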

\begin{proof}
Suppose that $K_1$ and $K_2$ have the same ramification index and residue field degree. In particular, the functions $w_{\mathcal O_{K_i}}^{\Q_p}(n)$, for $i=1,2$, are the same, by Theorem \ref{equality-theorem}. Hence, by definition, the set of characteristic ideals of the rings $\Int_{\Q_p}(\mathcal{O}_{K_i})$, $i=1,2$, coincide, so these rings have a common regular bases, and therefore they are equal.

Conversely, if the $\Int_{\Q_p}$-rings are equal, a straightforward adaptation of Proposition \ref{comparison IntQrings} to the present setting shows that the ramification indexes and residue field degrees of $K_1$ and $K_2$ are the same. Note that this part of the proof holds also without the tameness assumption.\qed
\end{proof}

\begin{remark}\label{unramified explicit bases} In the case $K/\Q_p$ is a finite unramified extension (so, in particular, a Galois extension), we can given an explicit basis of $
\Int_{\Q_p}(\mathcal{O}_K)$. Let $q=p^f$ be the cardinality of the residue field of $\mathcal O_K$. By Theorem \ref{equality-theorem}, for all $n\in\N$ we have $w_{\mathcal O_K}^{\Q_p}(n)=w_q(n)$. Let 
\[f(X):=\frac{X^q-X}{p}\]
which clearly belongs to $\Int_{\Q_p}(\mathcal O_{K})$. For $k\in\N$, we denote by $f^{\circ k}(X)$ the composition of $f$ with itself $k$ times, namely $f^{\circ k}(X)=f\circ\ldots\circ f(X)$. If $k=0$ we put $f^{0}(X):=X$.
For each positive integer $n\in\N$, we consider its $q$-adic expansion:
$$n=n_0+n_1q+\ldots+n_rq^r$$
where $n_i\in\{0,\ldots,q-1\}$ for all $i=0,\ldots,r$. We define
$$f_n(X):=\prod_{i=0}^{r}(f^{\circ i}(X))^{n_i}$$
Notice that $f_n(X)=X^n$ for $n=0,\ldots,q-1$ and $f_q(X)=f(X)$. Moreover, $f_n\in\Int_{\Q_p}(\mathcal O_{K})$ and has degree $n$, for every $n\in\N$. It is easy to prove by induction that ${\rm lc}(f^{\circ i})=p^{-a_i}$, where $a_i=1+q+\ldots+q^{i-1}=w_q(q^i!)$. By the same proof of  \cite[Chap. 2, Prop. II.2.12]{CaCh} one can show that ${\rm lc}(f_n)=p^{-w_q(n)}$
for every $n\in\N$, so, finally, the family of polynomials $\{f_n(X)\}_{n\in\N}$ is a regular basis of $\Int_{\Q_p}(\mathcal{O}_K)$.
\end{remark}

\begin{example}\label{remark3.5} 
In the next two examples we show the assumptions in Theorem \ref{equality-theorem} cannot be dropped. 
 
(1) 
If $K/\Q_p$ is not a Galois extension, then the restriction of the trace homomorphism to $\Int(\mathcal{O}_K)$ may give a polynomial in $\Q_p(X)$ which is not in $\Int_{\Q_p}(\mathcal{O}_K)$. 
For example,  let $K=\Q_2(\sqrt[3]{2})$, whose ring of integers is $\mathcal O_K=\Z_2[\sqrt[3]{2}]$. Then the polynomial 
\[f(X)=\frac{X(X-1)(X-\sqrt[3]{2})(X-(1+\sqrt[3]{2}))}{2}\] 
is in $\Int(\mathcal O_K)$ but its trace over $\Q_2(X)$ is equal to $g(X)=\frac{3X^2(X-1)^2}{2}$, which is not integer-valued over $\mathcal O_K$, since $g(\sqrt[3]{2})\notin \mathcal{O}_K$. 
One can show by an explicit computation that in this example the equality 
$w_{\mathcal O_K}^{\Q_p}(n)=\left\lfloor \frac{w_q(n)}{e}\right\rfloor$ does not hold for $n=4$. Indeed, the first four elements of a 
$\mathcal O_K$-basis of $\Int(\mathcal O_K)$ are 
\[f_1(X)=X; \quad f_2(X)=\frac{X(X-1)}{\sqrt[3]2};  \quad 
f_3(X)=\frac{ X(X-1)(X-\sqrt[3]{2})}{\sqrt[3]{2}}; \]
\[f_4(X)=\frac{X(X-1)(X-\sqrt[3]{2})(X-(1+\sqrt[3]{2}))}{2};\]
and considering all possible $\mathcal O_K$-combinations of these elements which lie in $\Q_2[X]$  (recall that $\Int_{\Q_2}(\mathcal{O}_K)=\Q_2[X]\cap\Int(\mathcal{O}_K)$) ,   
we see that 
 there is no element in $\Int_{\Q_2}(\mathcal O_K)$ 
of degree $4$ whose leading coefficient has valuation $-1=-\left\lfloor\frac{w_2(4)}{3}\right\rfloor$.

(2) 
We now discuss the tameness assumption. 
Consider the case of $K=\Q_2(i)$ with $i^2=-1$ and let $\{f_n(X):n\geq 0\}$ be  
a regular basis of $\Int(\mathcal O_K)$ obtained by means of compositions and products of the Fermat polynomial $\frac{X^2-X}{1+i}$ (in the same way as in the Example \ref{unramified explicit bases}; see \cite[Chapter II, p. 32]{CaCh}).
We set $G(X)=X^2-X$. 
One can check that 
\[f_6+if_4= -\frac{G^3}{4}+\frac{G^2}{2}-\frac{G}{2}\] and 
\[f_{10}+2f_8-2if_6+(1-2i)f_4=\frac{G^5}{16}+\frac{G^3}{8}-\frac{G^2}{4}+G\] 
belong to $\Int_{\Q_2}(\mathcal O_K)$ and their leading coefficients have 
valuation equal to $-\left\lfloor \frac{w_2(6)}{2}\right\rfloor=-2$ and 
$-\left\lfloor \frac{w_2(10)}{2}\right\rfloor=-4$, 
respectively; one can also check that 
\[-v_2\left(\mathfrak I_n\left(\Int_{\Q_2}(\mathcal O_K)\right)\right)=\left\lfloor \frac{w_2(n)}{2}\right\rfloor\]
for all $n\leq 11$. On the other hand, writing down a basis of $\Int(\mathcal O_K)$ up to degree $12$, and 
considering all possible $\mathcal O_K$-combinations of these elements which lie in $\Q_2[X]$,   
we see that 
\[-v_2\left(\mathfrak I_{12}\left(\Int_{\Q_2}(\mathcal O_K)\right)\right)=\left\lfloor \frac{w_2(12)}{2}\right\rfloor-1.\]
It might be interesting to describe the values taken by 
$v_p\left(\mathfrak I_{n}\left(\Int_{\Q_p}(\mathcal O_K)\right)\right)$ in the case of wild ramification. 
\end{example}

\subsection{Global case} 
Let $K/\Q$ be a finite Galois extension with absolute discriminant $D$ and degree $d$ over $\Q$.  For each rational prime $p$, denote by $f_p$ the residue class degree and $e_p$ its ramification degree in $O_K$.  As usual, we say that $K/\Q$ is tamely ramified if, for every prime $p\in\Z$, $p\nmid e_p$ . Let $q_p= p^{f_p}$ be the cardinality of the residue field of $K_p$. The following is a reformulation of Theorem \ref{Thm-Intro2} in the Introduction:

\begin{theorem}\label{prop3.5}
 Let $K/\Q$ be a tamely ramified Galois extension. Then 
\[\mathfrak I_n(\Int_\Q(\mathcal O_{K}))= \left(\prod_pp^{-\left\lfloor \frac{w_{q_p}(n)}{e_p}\right\rfloor}\right)\]
as fractional ideals of $\Z$, 
where the product is over the set of all primes $p\in\Z$. 
\end{theorem}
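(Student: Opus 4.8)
The plan is to reduce the global statement to the local computation in Theorem \ref{equality-theorem} by means of Proposition \ref{prop2.1}, which already expresses $\mathfrak I_n(\Int_\Q(\mathcal O_K))$ as an intersection of local characteristic ideals. First I would recall from the proof of Proposition \ref{prop2.1} that $\mathfrak I_n(\Int_\Q(\mathcal O_{K,(\p)}))$ coincides with $\mathfrak I_n(\Int_\Q(\mathcal O_K)_{(p)})$, and that (using the Galois hypothesis and the remark following Proposition \ref{prop2.1}) for a fixed prime $p$ all the rings $\Int_\Q(\mathcal O_{K,(\p)})$ with $\p\mid p$ are equal. So it suffices to identify, for each rational prime $p$, the fractional ideal $\mathfrak I_n(\Int_\Q(\mathcal O_{K,(\p)}))$ of $\Z_{(p)}$.

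The key local step is to pass from the localization $\mathcal O_{K,(\p)}$ to the completion $\mathcal O_{K,\p}$. Here I would invoke the standard fact that completion is faithfully flat and that integer-valued polynomials commute with completion in the relevant sense: $\Int_\Q(\mathcal O_{K,(\p)})\otimes_{\Z_{(p)}}\Z_p=\Int_{\Q_p}(\mathcal O_{K,\p})$, so that $v_p$ of the characteristic ideal is unchanged, i.e. $-v_p(\mathfrak I_n(\Int_\Q(\mathcal O_{K,(\p)})))=w_{\mathcal O_{K,\p}}^{\Q_p}(n)$. Since $K/\Q$ is Galois and tamely ramified at $p$, the completion $K_\p/\Q_p$ is again Galois and tamely ramified, with ramification index $e_p$ and residue field of cardinality $q_p$; applying Theorem \ref{equality-theorem} gives $w_{\mathcal O_{K,\p}}^{\Q_p}(n)=\left\lfloor\frac{w_{q_p}(n)}{e_p}\right\rfloor$. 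Hence $\mathfrak I_n(\Int_\Q(\mathcal O_{K,(\p)}))=p^{-\left\lfloor\frac{w_{q_p}(n)}{e_p}\right\rfloor}\Z_{(p)}$.

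Finally I would assemble the global ideal from its localizations. A nonzero fractional ideal $J$ of the Dedekind (indeed PID, up to localization) domain $\Z$ is determined by its localizations $J_{(p)}$ at all primes $p$, via $J=\bigcap_p J_{(p)}$; equivalently, writing $J=(r)$ with $r\in\Q^\times$, one has $v_p(r)=v_p(J_{(p)})$ for each $p$. Combining this with the previous paragraph yields $v_p(\mathfrak I_n(\Int_\Q(\mathcal O_K)))=-\left\lfloor\frac{w_{q_p}(n)}{e_p}\right\rfloor$ for every $p$, and since $w_{q_p}(n)=0$ once $q_p>n$ (so the exponent vanishes for all but finitely many $p$), the product $\prod_p p^{-\left\lfloor\frac{w_{q_p}(n)}{e_p}\right\rfloor}$ is a well-defined element of $\Q^\times$ and generates $\mathfrak I_n(\Int_\Q(\mathcal O_K))$ as a fractional ideal of $\Z$. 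The translation back to Theorem \ref{Thm-Intro2} is then immediate from \cite[Proposition II.1.4]{CaCh}: any regular basis element $f_n$ of degree $n$ has leading coefficient generating $\mathfrak I_n$, so its denominator is exactly $\prod_p p^{\omega_p(n)}$.

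The main obstacle I anticipate is the commutation of $\Int_\Q(-)$ with $\p$-adic completion — i.e. justifying $-v_p(\mathfrak I_n(\Int_\Q(\mathcal O_{K,(\p)})))=-v_p(\mathfrak I_n(\Int_{\Q_p}(\mathcal O_{K,\p})))$ — since unlike localization (already handled in Proposition \ref{prop2.1}), completion can in principle enlarge the ring of integer-valued polynomials or change leading coefficients; one must check that a $\Z_p$-combination of completed basis elements landing in $\Q_p[X]$ with small denominator can be approximated/descended to a $\Z_{(p)}$-combination in $\Q[X]$ with the same leading coefficient. I expect this to follow from a density argument (polynomials over $\Q$ are dense in polynomials over $\Q_p$ of bounded degree, and the conditions defining $\Int$ are closed), but it is the step requiring genuine care rather than bookkeeping.
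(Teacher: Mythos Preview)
Your proposal is correct and follows essentially the same route as the paper, which simply records that the product is finite (since $w_{q_p}(n)=0$ once $q_p>n$) and then invokes Proposition \ref{prop2.1} together with Theorem \ref{equality-theorem}. You are in fact more careful than the paper in isolating the passage from the localization $\mathcal O_{K,(\p)}$ to the completion $\mathcal O_{K,\p}$; the density argument you sketch (approximate a polynomial in $\Int_{\Q_p}(\mathcal O_{K,\p})$ coefficientwise by one in $\Q[X]$ with the same leading coefficient, and use $K\cap\mathcal O_{K,\p}=\mathcal O_{K,(\p)}$) is exactly what makes this step go through.
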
 
\begin{proof} Note that for a fixed $n$ we have  $w_q(n)=0$  for almost all prime powers $q$, and therefore 
the above product is well defined. 
The result follows immediately combining Proposition \ref{prop2.1} and Theorem \ref{equality-theorem}.\qed
\end{proof}

\subsection*{\textbf{Acknowledgments.}}

The authors wish to thank the referee for carefully reading the paper.

The first author is partially supported by PRAT 2013 ``Arithmetic of Varieties over Number Fields". 
The second author is partially supported by PRIN 2010/11 ``Arithmetic Algebraic Geometry and Number Theory" 
and PRAT 2013 ``Arithmetic of Varieties over Number Fields".
The third author has been supported by grant ``Bando Giovani Studiosi 2013'', Project title  ``Integer-valued polynomials over algebras'' Prot. GRIC13X60S of the University of Padova. 


\end{document}